\newcommand\dom{\textrm{dom}\xspace}
\newcommand\cod{\textrm{cod}\xspace}
\newcommand\DotCat{\textbf{Dot}\xspace}
\newcommand\Mat{\textbf{Mat}\xspace}
\newcommand\MGCat{\textbf{MGCat}\xspace}
\newcommand{\Free}{\textrm{Free}\xspace}
\newcommand{\Obj}{\textrm{Obj}\xspace}
\newcommand{\Mor}{\textrm{Mor}\xspace}
\newcommand{\Inputs}{\textrm{Inputs}\xspace}
\newcommand{\Outputs}{\textrm{Outputs}\xspace}
\newcommand{\Lbox}{\ell_b\xspace}
\newcommand{\Ldot}{\ell_d\xspace}
\newcommand{\Win}{\theta_{\textrm{in}}\xspace}
\newcommand{\Wout}{\theta_{\textrm{out}}}
\newcommand{\inp}{\textrm{in}}
\newcommand{\outp}{\textrm{out}}
\newcommand{\ZX}{\ensuremath{\mathbb Z[\mathbf{X}]}\xspace}
\newcommand{\ZXX}{\ensuremath{\mathbb Z[\mathbf{X} \cup \overline{\mathbf{X}}]}\xspace}
\title{Finite matrices are complete for (dagger-)hypergraph categories}
\author{Aleks Kissinger}
\begin{document}

\maketitle

\begin{abstract}
  Hypergraph categories are symmetric monoidal categories where each object is equipped with a special commutative Frobenius algebra (SCFA). Dagger-hypergraph categories are the same, but with dagger-symmetric monoidal categories and dagger-SCFAs. In this paper, we show that finite matrices over a field $K$ of characteristic $0$ are complete for hypergraph categories, and that finite matrices where $K$ has a non-trivial involution are complete for dagger-hypergraph categories.
\end{abstract}

\section{Introduction}

Hypergraph categories enrich the language of traced symmetric monoidal or compact closed categories by allowing many inputs and outputs of a morphism to be connected together. We can represent this in a minimal, algebraic way by equipping each object in the category with a special commutative Frobenius algebra (SCFA). Intuitively, SCFAs endow an object with the ability to `split', `merge', `initialise', and `terminate' a wire. Furthermore they have extremely well-behaved normal forms, in that any connected diagram is equal. Thus, in addition to being able to interpret normal string diagrams, we can naturally interpret their hypergraph variations, for example:
\[ \tikzfig{dot-diagram-example} \quad := \quad 
   \tikzfig{dot-diagram-expanded-canonical} \]

These structures play an important role in categories whose morphisms can be written as matrices over unital semirings, though they were perhaps overlooked for quite some time due to a certain failure of naturality, which concretely manifests itself as a `basis dependence'. However, in 2006 Coecke, Pavlovic, and Vicary pointed out that this basis-dependence is no accident, but rather that SCFAs \textit{characterise} bases in categories of linear maps~\cite{CPV}, as well as provide a useful set of building blocks for the types of maps one would define using a basis. This feature has been exploited numerous times, particularly within the program of categorical quantum mechanics~\cite{AC1,CD2,CPStar}.

This paper adds another piece to the story connecting SCFAs and bases. Namely, that the axioms of dagger-hypergraph categories are \textit{complete} for the category of finite-dimensional vector spaces where each object has a chosen basis, i.e. the category of \textit{finite matrices}.


\section{Preliminaries}\label{sec:prelims}

\begin{definition}
  A symmetric monoidal category $\mathcal C$ is called a \textit{dagger-symmetric monoidal category} if there exists a monoidal functor $(-)^\dagger : \mathcal C \to \mathcal C^{\textrm{op}}$ such that $\dagger \circ \dagger = 1_{\mathcal C}$ and:
  \[ \alpha^{-1} = \alpha^{\dagger} \qquad\qquad
     \lambda^{-1} = \lambda^{\dagger} \qquad\qquad
     \gamma^{-1} = \gamma^{\dagger} \]
  for $\alpha, \lambda, \gamma$ the associativity, unit, and symmetry natural transformations of $\mathcal C$.
\end{definition}

\begin{definition}
  A special commutative Frobenius algebra (SCFA) in a symmetric monoidal category $\mathcal C$ consists of a tuple $(A, \mu, \eta, \delta, \epsilon)$ satisfying the following equations:
  \[
      \mu \circ (1_A \otimes \eta) = 1_A \qquad\quad
      \mu \circ (1_A \otimes \mu) = \mu \circ (\mu \otimes 1_A) \qquad\quad
      \mu \circ \gamma_{A,A} = \mu
      \tag{\bf monoid}
  \]

  \[
      (1_A \otimes \epsilon) \circ \delta = 1_A \qquad\quad
      (1_A \otimes \delta) \circ \delta = (\delta \otimes 1_A) \circ \delta \qquad\quad
      \gamma_{A,A} \circ \delta = \delta
      \tag{\bf comonoid}
  \]

  \[
      (1_A \otimes \mu) \circ (\delta \otimes 1_A) = \delta \circ \mu
      \tag{\bf frobenius}
  \]

  \[
      \mu \circ \delta = 1_A
      \tag{\bf special}
  \]
  If $\mathcal C$ is a dagger-symmetric monoidal category, a \textit{dagger-SCFA} (dSCFA) additionally satisfies the equations
  \[ \mu^\dagger = \delta \qquad\qquad \eta^\dagger = \epsilon
     \tag{\bf dagger} \]
\end{definition}

From hence forth, we will always be working in a symmetric moniodal category, so we will use \textit{string diagram} notation for morphisms. In diagrammatic notation, the axioms of an SCFA become:
\ctikzfig{scfa-axioms}
The following is a well-known folk theorem about SCFAs.

\begin{theorem}[Spider] \label{thm:spider}
  Suppose $f$ and $g$ can be written as a connected string diagram from $A^{\otimes m}$ to $A^{\otimes n}$ consisting just of the morphisms from a single SCFA, then $f = g$.
\end{theorem}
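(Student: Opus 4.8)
The plan is to prove a \emph{normal form} theorem: every connected string diagram built from a single SCFA with $m$ inputs and $n$ outputs equals a canonical \emph{spider} $S_{m,n}$ that depends only on the pair $(m,n)$. Since $S_{m,n}$ is determined by the boundary data alone, two such diagrams $f, g \colon A^{\otimes m} \to A^{\otimes n}$ both reduce to $S_{m,n}$, giving $f = g$ immediately. I would take as definition $S_{m,n} := \delta^{(n)} \circ \mu^{(m)}$, where $\mu^{(m)} \colon A^{\otimes m} \to A$ is the iterated multiplication (with $\mu^{(0)} = \eta$, $\mu^{(1)} = 1_A$, $\mu^{(2)} = \mu$) and $\delta^{(n)} \colon A \to A^{\otimes n}$ the iterated comultiplication ($\delta^{(0)} = \epsilon$), so that the five generators are themselves spiders: $\mu = S_{2,1}$, $\eta = S_{0,1}$, $\delta = S_{1,2}$, $\epsilon = S_{1,0}$, and $1_A = S_{1,1}$.

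First I would establish that $\mu^{(m)}$ and $\delta^{(n)}$ are well defined, i.e.\ that any binary tree of multiplications with $m$ leaves, in any order, equals $\mu^{(m)}$; this is a routine induction from the associativity and commutativity axioms (\textbf{monoid}), and dually for $\delta^{(n)}$ from (\textbf{comonoid}). The two structural lemmas driving the reduction are then: (\emph{fusion}) plugging $k \ge 1$ outputs of a spider $S_{a,b}$ into $k$ inputs of a second spider $S_{c,d}$ yields the single spider $S_{a+c-k,\; b+d-k}$; and (\emph{loop removal}) connecting one output of a spider $S_{m,n}$ back to one of its own inputs yields $S_{m-1,n-1}$. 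Both follow by repeatedly pushing the Frobenius law $(1_A \otimes \mu) \circ (\delta \otimes 1_A) = \delta \circ \mu$ through the iterated operations, using associativity and commutativity to rearrange legs. Crucially, loop removal is exactly where the \textbf{special} axiom $\mu \circ \delta = 1_A$ is consumed, collapsing the cycle; for instance a bare circle $\epsilon \circ \mu \circ \delta \circ \eta$ reduces via \textbf{special} to $\epsilon \circ \eta = S_{0,0}$.

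With these in hand the theorem follows by induction on the number $N$ of generators, the base case $N \le 1$ being a single generator or a bare wire $1_A = S_{1,1}$. I would order the nodes $v_1, \dots, v_N$ of the diagram along a spanning tree of its (connected) underlying graph, so that each $v_i$ is joined by at least one wire to $\{v_1, \dots, v_{i-1}\}$. Maintaining the invariant that the subdiagram on $v_1, \dots, v_i$ equals a single spider, adding $v_{i+1}$ (itself a spider) merges it into the current spider by \emph{fusion} along one tree edge; any further wires between $v_{i+1}$ and the assembled part then become self-loops on that single spider and are absorbed by \emph{loop removal}. After processing all $N$ nodes only one spider remains, necessarily $S_{m,n}$.

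I expect the main obstacle to be the bookkeeping in this last step: carefully accounting for multiple parallel wires and feedback wires between the new node and the current component --- which can create cycles --- and verifying that fusion and loop removal really do cover every way generators can be wired together, with leg permutations coming from symmetries handled by commutativity. The well-definedness of the iterated operations and the fusion lemma are essentially forced by the axioms; it is the orchestration of these local moves into a global reduction for an \emph{arbitrary} connected diagram, together with the correct treatment of closed loops via the \textbf{special} law, that demands the most care.
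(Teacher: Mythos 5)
Your overall route---induction on the number of generators, reducing every connected diagram to the canonical cotree-after-tree form $S_{m,n} = \delta^{(n)} \circ \mu^{(m)}$ via spider fusion and loop removal---is exactly the proof the paper sketches and defers to~\cite{AleksThesis}, Theorem 3.2.28, so the strategy itself is not in question. The genuine gap is in your main induction step. The theorem is stated in a bare symmetric monoidal category: its string diagrams are directed acyclic, there is no trace, and ``a spider with one output plugged back into one of its own inputs'' is not a well-formed morphism there, so your loop-removal lemma is not even statable as written. Your spanning-tree ordering makes this bite: the assembled prefix $\{v_1, \dots, v_i\}$ need not be \emph{convex} in the diagram, since a directed path can leave the prefix through a not-yet-processed node and re-enter it --- e.g.\ $\mu \circ (1_A \otimes h) \circ \delta$ with the prefix $\{\delta, \mu\}$ and $h$ processed last. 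In that situation the prefix is not a sub-term of any sequential/parallel decomposition of the whole diagram, so you are not licensed to substitute the induction hypothesis (``prefix $=$ spider'') into the ambient diagram, and the subsequent ``fuse along one tree edge, remaining wires become self-loops'' manipulates objects that do not exist in a plain SMC. You flag this as ``bookkeeping'', but it is precisely the point where the argument, as stated, fails.

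There are two standard repairs. (a) First prove that the SCFA induces a self-dual compact structure: $\mathrm{cup} := \delta \circ \eta$ and $\mathrm{cap} := \epsilon \circ \mu$ satisfy the snake equations by the Frobenius and unit laws (the paper does this in Section~3, independently of the spider theorem, so there is no circularity). Cups and caps are themselves SCFA morphisms, hence absorbed by fusion; with them every wire can be bent, non-convex regions become substitutable, self-loops become honest morphisms, and your argument then goes through verbatim. (b) Alternatively, stay in the plain SMC but reorganize: process the boxes in a \emph{topological} order and weaken the invariant to ``the processed prefix equals a tensor product of spiders''. Then every wire between the prefix and the new node points forward into the new node, every substitution is convex, parallel wires between a prefix spider and the new node are eliminated by peeling off one $\delta$ and one $\mu$ so that the special law $\mu \circ \delta = 1_A$ applies directly (rather than via self-loops), and connectedness of the diagram is invoked only at the very end to conclude that the tensor product consists of a single spider $S_{m,n}$. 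With either repair, the remaining ingredients---generalized associativity/commutativity for $\mu^{(m)}, \delta^{(n)}$ and single-wire fusion from the Frobenius law---are as routine as you claim.
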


\begin{proof}
  The proof proceeds by induction on the size of diagrams, showing that any connected diagram can be rewritten into a canonical diagram consisting of a tree of multiplies followed by an upside-down tree of comultiplies:
  \[ \tikzfig{spider-diag-arb} \quad \Rightarrow \quad \tikzfig{spider-diag-canonical} \]
  See e.g.~\cite{AleksThesis}, Theorem 3.2.28. For an alternative derivation via distributive laws see~\cite{Lack}, \S5.4.
\end{proof}

We express the fact that there is only one connected diagram with $m$ inputs and $n$ outputs by collapsing connected diagrams into a single dot:
\[
\tikzfig{spider-diag-arb} \ =\ \tikzfig{single-spider}
\]

\begin{definition}\label{def:spider}
  Canonical tree/co-tree morphisms are called \textit{spiders}. We let:
  \[ S_m^n \ := \ \tikzfig{S-mn}\ =\ \tikzfig{S-mn-tree} \]
\end{definition}

\section{(Dagger-)hypergraph categories}\label{sec:hypergraph-cats}

\begin{definition}
  A symmetric monoidal category $\mathcal C$ where every object is equipped with an SCFA satisfying:
  \[ (A, \whitemult) \otimes (B, \graymult) \ =\ (A \otimes B, \prodmult{white dot}{gray dot}) \]
  is called a \textit{hypergraph category}. Similarly, a dagger-symmetric monoidal category where every object is equipped with a dSCFA is called a \textit{dagger-hypergraph category}.
\end{definition}

A strong (dagger-)hypergraph functor is a strong (dagger-)symmetric monoidal functor that preserves the hypergraph structure. A strong monoidal functor is triple $(F, p_F, U_F)$ consisting of a functor $F : \mathcal C \to \mathcal D$ and natural isomorphisms:
\[ (p_F)_{A,B} : F(A \otimes B) \to FA \otimes FB  \qquad\qquad u_F : FI \to I \]
satisfying:
\[
\tikzfig{monoidal-assoc}
\]

\medskip

\[
\tikzfig{monoidal-unit1}
\qquad\qquad
\tikzfig{monoidal-unit2}
\]
Preserving symmetries can be written using `functorial box' notation (c.f.~\cite{PaulAndreFunBox}):
\[
\tikzfig{monoidal-symm}
\qquad\textrm{where}\qquad
\tikzfig{funct-box} \ :=\ Ff : FA \to FB
\]
A strong dagger-symmetric monoidal functor additionally satisfies $F(f^\dagger) = (Ff)^\dagger$, and preserving the hypergraph structure means:
\[
\tikzfig{monoidal-mult} \qquad
\tikzfig{monoidal-unit} \qquad
\tikzfig{monoidal-comult} \qquad
\tikzfig{monoidal-counit}
\]

Dagger-hypergraph functors are important in particular because they define models of the free dagger-hypergraph category into a semantic category. We will make use of the following construction in building the particular models we use to show completeness in Section~\ref{sec:completeness}.

\begin{theorem}
  If $M$ and $N$ are (dagger-)hypergraph functors, then $(P,p_P,u_P)$ defined as:
  \[ P(A) := M(A) \otimes N(A) \qquad\qquad P(f) := M(f) \otimes N(f) \]
  \[
  \tikzfig{product-functor-p}
  \qquad\qquad
  \tikzfig{product-functor-u}
  \]
  is also a (dagger-)hypergraph functor.
\end{theorem}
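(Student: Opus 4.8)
The plan is to verify, one coherence condition at a time, that the \emph{product functor} $P$ satisfies every defining equation of a strong (dagger-)symmetric monoidal functor together with the four hypergraph-preservation equations, in each case reducing the requirement for $P$ to the corresponding facts already assumed for $M$ and $N$. First I would pin down the coherence data depicted in the statement: the isomorphism $(p_P)_{A,B}$ is the composite that applies $(p_M)_{A,B}\otimes(p_N)_{A,B}$ to $M(A\otimes B)\otimes N(A\otimes B)$ and then reassociates and swaps the two middle factors via the symmetry $\gamma$, turning $(MA\otimes MB)\otimes(NA\otimes NB)$ into $(MA\otimes NA)\otimes(MB\otimes NB)=PA\otimes PB$; likewise $u_P$ is $u_M\otimes u_N$ followed by a unitor $I\otimes I\to I$.

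That $P$ is a functor is immediate from bifunctoriality of $\otimes$: $P(g\circ f)=M(g\circ f)\otimes N(g\circ f)=(Mg\circ Mf)\otimes(Ng\circ Nf)=(Mg\otimes Ng)\circ(Mf\otimes Nf)=Pg\circ Pf$, and $P(1_A)=1_{PA}$ in the same way. Naturality of $p_P$ and $u_P$ follows from naturality of $p_M,p_N$ and of the symmetry, and both are isomorphisms since each constituent is. The substantive work lies in checking the monoidal coherence conditions \texttt{monoidal-assoc}, \texttt{monoidal-unit1} and \texttt{monoidal-unit2}: one expands $p_P$ into its $p_M\otimes p_N$ part and its symmetry-reshuffle part, so that the $p_M$ and $p_N$ parts close up using the coherence squares known for $M$ and $N$ separately, while the reshuffle parts must be commuted past the associators and the remaining swaps.

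The main obstacle is precisely this last piece of bookkeeping: because $p_P$ contains a symmetry interchanging the $M$- and $N$-legs, the associativity square for $P$ reduces to an equality between two composites built solely from associators and symmetries, conjugated by the coherence data of $M$ and $N$. I would discharge this by invoking the coherence theorem for symmetric monoidal categories (equivalently, repeated naturality of $\gamma$ together with the hexagon axioms), which guarantees that any two parallel morphisms assembled from associators and symmetries coincide; this collapses each coherence square for $P$ onto the already-assumed squares for $M$ and $N$. Preservation of symmetries, \texttt{monoidal-symm}, is handled the same way.

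Finally I would check the four hypergraph-preservation equations \texttt{monoidal-mult}, \texttt{monoidal-unit}, \texttt{monoidal-comult}, \texttt{monoidal-counit}. By the defining property of a hypergraph category the SCFA carried by $PA=MA\otimes NA$ is the product SCFA of those on $MA$ and $NA$, whose multiplication is $\mu_{MA}\otimes\mu_{NA}$ conjugated by the same middle-swap symmetry that appears in $p_P$; since $M$ and $N$ preserve multiplications, $P(\mu_A)=M(\mu_A)\otimes N(\mu_A)$ matches this product multiplication once the $p_P$ isomorphisms are inserted, and identically for $\eta,\delta,\epsilon$ (appealing to the Spider theorem to absorb any connected rearrangement where convenient). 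For the dagger case the only extra point is $P(f^\dagger)=(Pf)^\dagger$, which follows from $M(f^\dagger)=(Mf)^\dagger$, $N(f^\dagger)=(Nf)^\dagger$ and the identity $(g\otimes h)^\dagger=g^\dagger\otimes h^\dagger$ holding in a dagger-symmetric monoidal category, together with the fact that the symmetries and unitors used to build $p_P$ and $u_P$ are unitary.
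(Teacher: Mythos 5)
The paper states this theorem without any proof (it is treated as a routine construction), so there is no argument of the paper's own to compare yours against; your proposal is, in substance, the verification the paper leaves implicit, and it is essentially correct. Your identification of the coherence data ($p_P$ as $p_M \otimes p_N$ followed by the middle-four interchange, $u_P$ as $u_M \otimes u_N$ followed by a unitor), functoriality of $P$ via the interchange law, and the reduction of the hypergraph-preservation equations to those of $M$ and $N$ via the product-SCFA condition that is built into the very definition of a hypergraph category, $(A, \mu_A) \otimes (B, \mu_B) = (A \otimes B, (\mu_A \otimes \mu_B) \circ (1 \otimes \gamma \otimes 1))$, are exactly the right steps. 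One statement does need repair: the coherence theorem for symmetric monoidal categories does \emph{not} say that any two parallel morphisms built from associators and symmetries coincide --- $\gamma_{A,A}$ and $1_{A \otimes A}$ are parallel canonical maps that generally differ --- it says they coincide when they induce the same permutation of the tensor factors. In your application the hypothesis does hold: after absorbing the $p_M$, $p_N$ parts via the coherence squares for $M$ and $N$, both legs of the associativity square for $P$ are canonical maps realizing the same interleaving permutation of the six factors $MA, MB, MC, NA, NB, NC$, so the conclusion follows; but the permutation condition must be stated, since it is what makes the collapse legitimate. Two smaller remarks: the appeal to the Spider theorem in the SCFA-preservation step is unnecessary (the product-SCFA equation plus the preservation equations for $M$ and $N$ and naturality already suffice), and in the dagger case note that the paper's definition of dagger-strong monoidal functor only demands $F(f^\dagger) = (Ff)^\dagger$, so your unitarity observations about $p_P$ and $u_P$ are fine but more than is strictly required.
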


Let \MGCat be the 2-category of hypergraph categories, strong monoidal functors preserving the SCFA structure, and monoidal natural transformations. Let $\MGCat^\dagger$ be the 2-category of dagger-hypergraph categories, dagger-strong monoidal functors preserving the dSCFA structure, and monoidal natural transformations.

Note how we only assume a hypergraph category category is symmetric monoidal, rather than traced symmetric monoidal or compact closed. That is because this extra structure comes for free.

\begin{theorem}
  A (dagger-)hypergraph category is (dagger-)compact closed, with a coherent choice of self-dual compact structure for each object $A$, where by coherent we mean for all objects $A, B$, the following equations are satisfied:
  \[
  \tikzfig{product-cap} \qquad\qquad \tikzfig{swap-cap}
  \]
\end{theorem}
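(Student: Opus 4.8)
The plan is to equip every object $A$ with the self-dual compact structure induced by its own SCFA, and then to reduce each required identity either to the Spider theorem or to a single SCFA axiom. Concretely, for $A$ with SCFA $(A,\mu,\eta,\delta,\epsilon)$ I take the cup to be the state $\delta\circ\eta : I\to A\otimes A$ and the cap to be the effect $\epsilon\circ\mu : A\otimes A\to I$. To establish the theorem I must (i) verify the two snake equations, so that $A$ is its own dual and $\mathcal C$ is compact closed; (ii) in the dagger setting, check that the cap is the adjoint of the cup, upgrading this to a dagger-compact structure; and (iii) prove the two coherence equations, product-cap and swap-cap.

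First I would dispatch the snakes using the Spider theorem. Each yanking composite, for instance $((\epsilon\circ\mu)\otimes 1_A)\circ(1_A\otimes(\delta\circ\eta)) : A\to A$, is a connected string diagram from $A$ to $A$ built solely from the morphisms of the single SCFA on $A$; by Theorem~\ref{thm:spider} it is therefore equal to the canonical spider $S_1^1$ of Definition~\ref{def:spider}. Applying the same theorem to $\mu\circ(1_A\otimes\eta)$, which equals $1_A$ by the (monoid) unit law, identifies $S_1^1 = 1_A$. Hence both snakes collapse to $1_A$, so $A$ is self-dual and the promised compact (indeed traced) structure is free, as anticipated in the remark preceding the statement.

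For the dagger case, the (dagger) axioms $\mu^\dagger=\delta$ and $\eta^\dagger=\epsilon$ give $\delta^\dagger=\mu$ via $\dagger\circ\dagger = 1_{\mathcal C}$, so $(\delta\circ\eta)^\dagger = \eta^\dagger\circ\delta^\dagger = \epsilon\circ\mu$; that is, the cap is precisely the adjoint of the cup, making the structure dagger-compact. The (comonoid) commutativity law $\gamma_{A,A}\circ\delta = \delta$ then yields $\gamma_{A,A}\circ(\delta\circ\eta) = \delta\circ\eta$, i.e. the cup is invariant under swapping its two legs; this is exactly the swap-cap equation.

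It remains to treat product-cap. Here I would unfold the cup on $A\otimes B$ using the defining hypergraph axiom that the SCFA on $A\otimes B$ is the tensor-product SCFA of those on $A$ and $B$: its unit is $\eta_A\otimes\eta_B$ and its comultiplication is $(1_A\otimes\gamma_{A,B}\otimes 1_B)\circ(\delta_A\otimes\delta_B)$, so their composite rearranges, by naturality of $\gamma$, into $(1_A\otimes\gamma_{A,B}\otimes 1_B)\circ\big((\delta_A\circ\eta_A)\otimes(\delta_B\circ\eta_B)\big)$ --- the cup on $A$ tensored with the cup on $B$, reordered by a single symmetry. The dual computation handles the cap. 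I expect the one genuinely fiddly point to be the bookkeeping of the symmetry isomorphisms threading the four wires of $(A\otimes B)\otimes(A\otimes B)$; but this is routine symmetric-monoidal coherence and introduces no difficulty beyond the Spider theorem already used above.
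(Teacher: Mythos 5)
Your proposal is correct and takes essentially the same approach as the paper: the same cup $\delta\circ\eta$ and cap $\epsilon\circ\mu$, with the coherence equations derived from (co)commutativity and the definition of the SCFA on $A\otimes B$ as the tensor-product SCFA. The only cosmetic differences are that you dispatch the snake equations via the Spider theorem together with the unit law (neatly avoiding the question of whether $1_A$ itself counts as a connected SCFA diagram), where the paper manipulates the Frobenius axioms directly, and that you spell out the dagger check $(\delta\circ\eta)^\dagger=\epsilon\circ\mu$ which the paper leaves implicit.
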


\begin{proof}
  We define the compact structure on $A$ in terms of its SCFA.
  \[ \tikzfig{frob-cap} \qquad\qquad \tikzfig{frob-cup} \]
  We can use the Frobenius axioms to show this cap and cup satisfy the snake equations.
  \[ \tikzfig{snake1} \qquad\qquad \tikzfig{snake2} \]
  The coherence equations follow from (co)commutativity of the Frobenius algebras and the definition of $A \otimes B$ as:
  \[ (A, \whitemult) \otimes (B, \graymult) \ =\ (A \otimes B, \prodmult{white dot}{gray dot}) \]
\end{proof}

\begin{remark}
  This notion is very close to that of `compact closed with a coherent self-duality', as introduced by Selinger in~\cite{SelingerSelfDual}. When a category $\mathcal C$ has a monoidal product that is free on objects, the two notions coincide. We simply let the (non-self) dual of $A = B_1 \otimes \ldots \otimes B_n$ be the `reversed' object $A^* := B_n \otimes \ldots \otimes B_1$ and define caps, cups, and the self-duality $A \cong A^*$ in the obvious way.

  If $\mathcal C$ is \textit{not} free on objects, we can pass by a standard construction to a new category $\widetilde{\mathcal{C}}$ that is free on objects then define the requisite structure.
\end{remark}

\section{Dot-diagrams}\label{sec:dot-diagrams}

We now define dot-diagrams, following the construction from~\cite{HasegawaHofmannPlotkin,SelingerCompleteness}.

\begin{definition}
  A (dagger) dot-diagram $F = (B^F, D^F, I^F, O^F, \Lbox^F, \Ldot^F, \Win^F, \Wout^F)$ for a (dagger) signature $\Sigma$ consists of the following data:
  \begin{itemize}
    \item A finite set $B^F$ of \textit{boxes},
    \item a finite set $D^F$ of multi-edges, or \textit{dots},
    \item a finite set $I^F := \{ \inp_1, \ldots, \inp_m \}$ of \textit{diagram inputs},
    \item a finite set $O^F := \{ \outp_1, \ldots, \outp_n \}$ of \textit{diagram outputs},
    \item labelling functions $\Lbox^F : B^F \to \Mor$ and $\Ldot^F : D^F \to \Obj$,
    \item and wiring functions $\Win : \Inputs^F \to D^F$ and $\Wout : \Outputs^F \to D^F$, where:
    \begin{align*}
      \Inputs^F & := \{ (b,i) \ |\ 
        b \in B^F,\,
        n = |\dom(\Lbox^F(b))|,\,
        1 \leq i \leq n \} + I^F \\
      \Outputs^F & := \{ (j,b) \ |\ 
        b \in B^F,\,
        m = |\cod(\Lbox^F(b))|,\,
        1 \leq j \leq m \} + O^F
    \end{align*}
  \end{itemize}
  Satisfying the conditions:
  \begin{itemize}
    \item $\Ldot^F(\Win^F(b,i)) = \dom(\Lbox^F(b))[i]$
    \item $\Ldot^F(\Wout^F(j,b)) = \cod(\Lbox^F(b))[j]$
  \end{itemize}
\end{definition}

Dot-diagrams are much like string diagrams, except that rather than requiring each wire to be associated with precisely one input and output (either to a box or the diagram as a while), we allow a single `wire', which we now called a \textit{dot}, to be connected to many inputs/outputs. 
\[
\tikzfig{string-diagram-example}
\qquad\qquad \textrm{vs.} \qquad\qquad
\tikzfig{dot-diagram-example}
\]
Thus, dots serve as multi-edges in dot-diagrams. Normal string diagrams can be seen as a subset of dot-diagrams, where we write dots with one input and one output just as wires:
\[
\tikzfig{string-diagram-example} \quad = \quad
\tikzfig{string-dot-diagram-example}
\]

\begin{definition}
  Let $F$ and $G$ be dot-diagrams where $I^F = I^G$ and $O^F = O^G$. A \textit{dot-diagram homomorphism} $\Phi : F \to G$ is a pair of functions $(\Phi_b, \Phi_d)$ that respect all of the structure:
  \begin{align*}
    \Lbox^G(\Phi_b(b)) & = \Lbox^F(b) &
    \Win^G(\Phi_b(b), i) & = \Phi_d(\Win^F(b,i)) &
    \Win^F(\inp_i) & = \Win^G(\inp_i) \\
    \Ldot^G(\Phi_d(d)) & = \Ldot^F(d) &
    \Wout^G(j, \Phi_b(b)) & = \Phi_d(\Wout^F(j, b)) &
    \Wout^F(\outp_j) & = \Wout^G(\outp_j)
  \end{align*}
\end{definition}


It will simplify to proof to first restrict to the case where diagrams have no (global) inputs/outputs, and no `free-floating' dots. We call these simple closed dot-diagrams.

\begin{definition}
  A (dagger) dot-diagram is called \textit{simple} when the wiring functions $\Win, \Wout$ are both \textbf{surjections}, and it is called \textit{closed} when $I^M = O^M = \{ \}$.
\end{definition}

The only significant difference with the definition from~\cite{SelingerCompleteness} is that the bijections $\Win$, $\Wout$ are replaced with surjections. This makes the `dots' in dot-diagrams serve as multi-edges, rather than single wires.

When considering homomorphisms of simple, closed dot-diagrams, three of the equations above become redundant:
\[
\Ldot^G(\Phi_d(d)) = \Ldot^F(d) \qquad\qquad
\Win^F(\inp_i) = \Win^G(\inp_i) \qquad\qquad
\Wout^F(\outp_j) = \Wout^G(\outp_j)
\]
The first is forced by surjectivity and the fact that connections between boxes and dots must respect $\Sigma$. The other two are vacuously satisfied for closed diagrams. Furthermore, we only need to require the box function $\Psi_b$ to be surjective in order to obtain a surjective homomorphism of dot-diagrams.

\begin{theorem}\label{thm:surjection-lift}
  Let $(\Psi_b, \Psi_d)$ be a homomorphism of dot-diagrams, and let $\Psi_b$ be a surjective function. Then $\Psi_d$ is also surjective.
\end{theorem}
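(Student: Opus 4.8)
The plan is to prove surjectivity of $\Psi_d$ pointwise: fix an arbitrary dot $d' \in D^G$ and exhibit a preimage in $D^F$. The key leverage is that we are working with \emph{simple} dot-diagrams, so by definition the wiring functions $\Win^G$ and $\Wout^G$ are surjective. Hence every dot of $G$ is attached to at least one box port, i.e. $d'$ lies in the image of $\Win^G$ or of $\Wout^G$. This reduces the whole problem to `tracing back' a single box--dot connection through the homomorphism.

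First I would treat the case $d' = \Win^G(b', i)$ for some box $b' \in B^G$ and input index $i$ (the output case, $d' = \Wout^G(j, b')$, is entirely symmetric). Since $\Psi_b$ is surjective, choose $b \in B^F$ with $\Psi_b(b) = b'$. The label-preservation condition $\Lbox^G(\Psi_b(b)) = \Lbox^F(b)$ guarantees that $b$ and $b'$ carry the same morphism label, and in particular the same domain, so the index $i$ is a legitimate input index of $b$ as well. I can then set $d := \Win^F(b,i) \in D^F$.

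It remains to verify $\Psi_d(d) = d'$, which is immediate from the wiring-preservation clause of a dot-diagram homomorphism: $\Psi_d(\Win^F(b,i)) = \Win^G(\Psi_b(b), i) = \Win^G(b', i) = d'$. Running the symmetric computation with $\Wout$ in place of $\Win$ handles the output case. Since $d'$ was arbitrary, $\Psi_d$ is surjective.

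The only point requiring care---and the one place where the simplicity and closedness hypotheses are genuinely used---is the reduction in the first paragraph: without simplicity a dot of $G$ could be `free-floating' (in the image of neither $\Win^G$ nor $\Wout^G$), and then no box-lifting argument could ever reach it. I therefore do not expect a deep obstacle so much as a need to be scrupulous about two bookkeeping facts that do the real work: that simplicity of $G$ supplies the surjectivity of its wiring functions, and that the label-preservation clause is exactly what makes the index $i$ transfer between $b'$ and its lift $b$. I would make sure to invoke both explicitly rather than treat them as routine.
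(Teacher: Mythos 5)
Your proof is correct and is essentially the paper's own argument unfolded pointwise: the paper packages exactly the same ingredients (simplicity making the wiring functions surjective, label-preservation making the induced map $\widehat\Psi_b$ on box-inputs well-defined and surjective, and the wiring-preservation condition) into a single commutative square $\Psi_d \circ \Win^F = \Win^G \circ \widehat\Psi_b$, whose lower composite is surjective and hence forces $\Psi_d$ to be surjective. The only cosmetic difference is that the paper works with $\Win$ alone---simplicity already makes $\Win^G$ surjective on its own---so your case split between $\Win^G$ and $\Wout^G$ is harmless but unnecessary.
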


\begin{proof}
  Since $\Psi_b$ is a surjection, it indices a surjection $\widehat\Psi_b : \textrm{Inputs}^F \to \textrm{Inputs}^G$. Then, by the homomorphism conditions, the following diagram commutes:
  \[
  \begin{tikzpicture}
  \matrix(m)[cdiag]{
    \textrm{Inputs}^F & B^F \\
    \textrm{Inputs}^G & B^G \\
  };
  \path [arrs]
    (m-1-1) edge [->>] node {$\Win^F$} (m-1-2)
    (m-2-1) edge [->>] node {$\Win^G$} (m-2-2)
    (m-1-1) edge [->>] node [swap] {$\widehat\Psi_b$} (m-2-1)
    (m-1-2) edge [->] node {$\Psi_d$} (m-2-2);
  \end{tikzpicture}
  \]
  Suppose $\Psi_d$ is not surjective, then $\Psi_d \circ \Win^F$ ($= \Win^G \circ \Psi_b$) would not be surjective, which is a contradiction. Thus $\Psi_d$ is surjective.
\end{proof}

Unlike in~\cite{SelingerCompleteness}, it is the property of \textit{surjectivity} that lifts from the box function to the whole homomorphism, not \textit{isomorphism}. In fact, there are examples of homomorphisms with bijective box functions whose dot function is merely a surjection, and \textit{not} a bijection, e.g.
\ctikzfig{bij-surj-hm}

\begin{definition}
  For a (dagger) monoidal signature $\Sigma$, the (dagger) hypergraph category $\DotCat(\Sigma)$ of dot-diagrams is defined as follows:
  \begin{itemize}
    \item \textbf{Objects} are words in $\Obj^*$,
    \item \textbf{Morphisms} are (isomorphism classes of) dot-diagrams $F$ such that for all $i,j$:
    \[ \Ldot(\Win(\inp_i)) = \dom(F)[i] \qquad\qquad
       \Ldot(\Wout(\outp_j)) = \cod(F)[j] \]
    where $\dom(F)[i]$ is the $i$-th object in the input word of $F$ and $\cod(F)[j]$ the $j$-th object in the output word.
    \item \textbf{Composition} is defined by pushing out over adjacent dots:
    \[
    \begin{tikzpicture}[scale=2]
    \matrix(m)[cdiag]{
      O^F & I^G & D^G \\
      D^F &     & D^{G\circ F} \\
    };
    \path [arrs]
    (m-1-1) edge node {$\cong$} (m-1-2)
    (m-1-2) edge node {$\Win^G$} (m-1-3)
    (m-1-1) edge node [swap] {$\Wout^F$} (m-2-1)
    (m-1-3) edge node {$\iota^G$} (m-2-3)
    (m-2-1) edge node [swap] {$\iota^F$} (m-2-3);
    \NWbracket{(m-2-3)}
    \end{tikzpicture}
    \]
    where $\Win^G$ and $\Wout^F$ are (restrictions of) the wiring functions of $G$ and $F$. Then:
    \[ G \circ F := (B^F + B^G, D^{G \circ F}, I^F, O^G, \Lbox^{G\circ F}, \Ldot^{G\circ F}, \Win^{G\circ F}, \Wout^{G\circ F}) \]
    for $\Lbox^{G\circ F} := [\Lbox^F,\Lbox^G]$ induced by the coproduct of boxes, $\Ldot^{G\circ F}$ by the pushout of dots:
    \[
    \begin{tikzpicture}[scale=2]
    \matrix(m)[cdiag]{
      O^F & I^G & D^G & \\
      D^F &     & D^{G\circ F} &     \\
          &     &              & \Mor \\
    };
    \path [arrs]
    (m-1-1) edge node {$\cong$} (m-1-2)
    (m-1-2) edge node {$\Win^G$} (m-1-3)
    (m-1-1) edge node [swap] {$\Wout^F$} (m-2-1)
    (m-1-3) edge node {$\iota^G$} (m-2-3)
    (m-2-1) edge node [swap] {$\iota^F$} (m-2-3)
    (m-1-3) edge [bend left=30] node {$\Ldot^G$} (m-3-4)
    (m-2-1) edge [bend right=15] node {$\Ldot^F$} (m-3-4)
    (m-2-3) edge [dashed] node {$\Ldot^{G\circ F}$} (m-3-4);
    \NWbracket{(m-2-3)}
    \end{tikzpicture}
    \]
    and the wiring functions defined by:
    \[
    \begin{tikzpicture}[scale=2]
    \matrix(m)[cdiag]{
      \Inputs^{G \circ F} & \Inputs^F + \Inputs^G & & D^F + D^G & D^{G \circ F} \\
    };
    \path [arrs]
    (m-1-1) edge [right hook-latex] node {} (m-1-2)
    (m-1-2) edge node {$\Win^F + \Win^G$} (m-1-4)
    (m-1-4) edge [>=latex,->>] node {} (m-1-5);
    \end{tikzpicture}
    \]
    \[
    \begin{tikzpicture}[scale=2]
    \matrix(m)[cdiag]{
      \Outputs^{G \circ F} & \Outputs^F + \Outputs^G & & D^F + D^G & D^{G \circ F} \\
    };
    \path [arrs]
    (m-1-1) edge [right hook-latex] node {} (m-1-2)
    (m-1-2) edge node {$\Wout^F + \Wout^G$} (m-1-4)
    (m-1-4) edge [>=latex,->>] node {} (m-1-5);
    \end{tikzpicture}
    \]
    \item the \textbf{monoidal product} is defined as the disjoint union of two dot-diagrams:
    \[ F \otimes G := (B^F + B^G,\, D^F + D^G,\, I^F + I^G,\, O^F + O^G,\, [\Lbox^F, \Lbox^G],\, [\Ldot^F, \Ldot^G],\, \Win^F + \Win^G,\, \Wout^F + \Wout^G) \]
    where $I^F + I^G$ and $O^F + O^G$ are chosen coproducts of the form:
    \begin{align*}
      \{ \inp_1, \ldots, \inp_m \} + \{ \inp_1, \ldots, \inp_{m'} \}
      & := \{ \inp_1, \ldots, \inp_m, \inp_{m+1}, \ldots, \inp_{m+m'} \} \\
      \{ \outp_1, \ldots, \outp_n \} + \{ \outp_1, \ldots, \outp_{n'} \}
      & := \{ \outp_1, \ldots, \outp_n, \outp_{n+1}, \ldots, \outp_{n+n'} \}
    \end{align*}
    The \textbf{monoidal unit} is given by the empty dot-diagram.
    \item \textbf{Swap} maps are defined as pairs of dots with swapped outputs (or inputs):
    \begin{align*}
      \gamma_{A,B} & := (\{ d_1,d_2 \}, \{ \}, \{ \inp_1, \inp_2 \}, \{ \outp_1, \outp_2 \}, \{ \}, \{ d_1 \mapsto A, d_2 \mapsto B \}, \\
            & \qquad \{ \inp_1 \mapsto d_1, \inp_2 \mapsto d_2 \}, \{ \outp_1 \mapsto d_2, \outp_2 \mapsto d_1 \})
    \end{align*}
    \item For a fixed object $A$, we define the \textbf{SCFA structure} on $A$. Each map is the unique dot-diagram with a single dot of type $A$ and the appropriate number of inputs/outputs:
    \begin{align*}
      \mu_A & := (\{ d \}, \{ \}, \{ \inp_1, \inp_2 \}, \{ \outp_1 \}, \{ \},
        \{ d \mapsto A \}, !, !) \\
      \eta_A & := (\{ d \}, \{ \}, \{ \}, \{ \outp_1 \}, \{ \},
        \{ d \mapsto A \}, !, !) \\
      \delta_A & := (\{ d \}, \{ \}, \{ \inp_1 \}, \{ \outp_1, \outp_2 \}, \{ \},
        \{ d \mapsto A \}, !, !) \\
      \epsilon_A & := (\{ d \}, \{ \}, \{ \inp_1 \}, \{ \}, \{ \},
        \{ d \mapsto A \}, !, !) \\
    \end{align*}
    where $!$ is in all cases the terminal map into $\{ d \}$. \textbf{Identities} arise in the same way, taking a single input and output:
    \[ 1_A := (\{ d \}, \{ \}, \{ \inp_1 \}, \{ \outp_1 \}, \{ \},
        \{ d \mapsto A \}, !, !) \]
    \item If $\Sigma$ is a dagger signature, the \textbf{dagger structure} is obtained by changing all of the boxes to their daggered versions and interchanging the role of inputs/outputs:
    \[ F^\dagger = (B^F, D^F, O^F, I^F, \dagger \circ \Lbox^F, \Ldot^F, \Wout^F, \Win^F) \]
    where we change an element `$\inp_j$' to `$\outp_j$' as appropriate, and vice-versa.
  \end{itemize}
\end{definition}

This is very close to the combinatoric string diagram presentation of the free traced symmetric monoidal category given in~\cite{HasegawaHofmannPlotkin}. The biggest departure is in composition. In order to obtain a new dot-diagram as a composition of dot-diagrams, adjacent dots fuse together:
\[ \tikzfig{dots-fuse-g} \ \circ\ \tikzfig{dots-fuse-f} \ =\ \tikzfig{dots-fuse} \]

\begin{theorem}
  The category $\DotCat(\Sigma)$ is the free (dagger-)hypergraph category over a signature $\Sigma$. In other words, there is an equivalence of categories:
  \[ \textbf{Mod}(\Sigma, \mathcal C) \simeq \textbf{MGCat}(\DotCat(\Sigma), \mathcal C) \]
  or for $\Sigma$ a dagger-signature and $\mathcal C$ a dagger-hypergraph category:
  \[ \textbf{Mod}(\Sigma, \mathcal C) \simeq \textbf{MGCat}^\dagger(\DotCat(\Sigma), \mathcal C) \]
\end{theorem}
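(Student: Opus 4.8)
The plan is to exhibit the claimed equivalence via a pair of functors that are mutually inverse up to natural isomorphism, organised around a single \emph{generic model} $\iota : \Sigma \to \DotCat(\Sigma)$ sending each object symbol to the length-one word and each box of $\Sigma$ to the dot-diagram consisting of that one box wired through fresh dots to the diagram's inputs and outputs. In one direction, a (dagger-)hypergraph functor $F : \DotCat(\Sigma) \to \mathcal C$ is sent to the model $F \circ \iota$ obtained by applying $F$ to the generic model, and a monoidal natural transformation to its restriction; this direction is manifestly functorial, so the real content lies in the other direction, namely extending a model $M$ to a functor $\widehat M : \DotCat(\Sigma) \to \mathcal C$.

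For the extension I would set $\widehat M(A_1 \cdots A_n) := M(A_1) \otimes \cdots \otimes M(A_n)$, and interpret a dot-diagram by reading off its pieces in $\mathcal C$: each box becomes its $M$-image, while each dot of type $A$ carrying $m$ incident inputs and $n$ incident outputs becomes the spider $S_m^n$ (Definition~\ref{def:spider}) built from the SCFA that $\mathcal C$ supplies on $M(A)$; the full diagram is then the composite and tensor of these pieces dictated by $\Win$ and $\Wout$. The crucial point is that this assignment descends to \emph{isomorphism classes}: a dot-diagram isomorphism merely relabels boxes and dots, and since the spider attached to a dot depends only on its numbers of incident legs---by the Spider Theorem~\ref{thm:spider}---isomorphic dot-diagrams receive equal interpretations.

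Next I would verify that $\widehat M$ is a (dagger-)hypergraph functor. Preservation of the monoidal product and of the SCFA generators is immediate, since $\mu_A, \eta_A, \delta_A, \epsilon_A$ are precisely the single-dot diagrams whose spiders are the SCFA operations on $M(A)$. Functoriality requires the pushout-of-dots composition of $\DotCat(\Sigma)$ to map to ordinary composition in $\mathcal C$; here the pushout fuses the adjacent dots of $F$ and $G$ into single dots of $G \circ F$, and this corresponds exactly to the spider-fusion law---two spiders composed along a shared wire equal a single spider---which is again a consequence of Theorem~\ref{thm:spider}. In the dagger case one checks additionally that the diagram-level dagger (reversing inputs and outputs and daggering boxes) matches the dagger of $\mathcal C$, using $(S_m^n)^\dagger = S_n^m$, which follows from $\mu^\dagger = \delta$ and $\eta^\dagger = \epsilon$.

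Finally I would assemble the equivalence. The composite $M \mapsto \widehat M \mapsto \widehat M \circ \iota$ recovers $M$ essentially on the nose, up to the canonical coherence isomorphisms of $\mathcal C$. Conversely, for a given $F$ I must produce a monoidal natural isomorphism $F \cong \widehat{F \circ \iota}$; this is the heart of the argument and rests on the fact that \emph{every} morphism of $\DotCat(\Sigma)$ is generated from boxes, SCFA structure, and swaps, all of which $F$ preserves, so $F$ is determined up to coherent iso by its values on $\iota$. I expect the main obstacle to be exactly this uniqueness step together with the well-definedness on isomorphism classes: both reduce to marshalling many applications of the Spider Theorem and the monoidal coherence data into a clean naturality argument, and the bookkeeping that matches pushout composition against spider fusion is where the proof is most delicate.
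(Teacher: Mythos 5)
Your proposal is viable, but it takes a genuinely different route from the paper. You prove freeness by verifying the universal property directly: restriction along a generic model $\iota$ in one direction, and a hand-built interpretation functor $\widehat M$ (boxes to their $M$-images, dots to spiders $S_m^n$) in the other, with the Spider Theorem giving well-definedness on isomorphism classes and the match between pushout-composition and spider fusion giving functoriality. The paper never constructs an interpretation functor at all: it takes as given that the free (dagger-)hypergraph category has an ``obvious'' syntactic presentation, namely $\Free(\Sigma')_\equiv$ --- the free traced symmetric monoidal category over $\Sigma$ enlarged with SCFA generators, modulo the SCFA equations --- and then exhibits an isomorphism of categories $\DotCat(\Sigma) \cong \Free(\Sigma')_\equiv$ via two identity-on-objects functors: one collapses each connected component of SCFA generators to a dot (well-defined by the Spider Theorem), the other expands each dot into the canonical spider $S_m^n$, and both round-trips are the identity. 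The paper's reduction buys exactly the avoidance of the two steps you yourself flag as delicate: (i) defining $\widehat M$ on a combinatorial object requires choosing a decomposition of the diagram into composites and tensors of generators and proving the result independent of that choice (a Joyal--Street-style coherence argument), which the paper sidesteps because morphisms of $\Free(\Sigma')$ already \emph{are} string diagrams, so interpretation into $\mathcal C$ exists by freeness of the traced SMC; and (ii) your uniqueness step $F \cong \widehat{F \circ \iota}$ needs the fact that every dot-diagram is generated by boxes, spiders, and swaps, which in the paper is precisely the content of the round-trip identities between $\DotCat(\Sigma)$ and $\Free(\Sigma')_\equiv$. What your route buys in exchange is self-containedness: it does not presuppose the freeness of $\Free(\Sigma')_\equiv$ (asserted ``by construction'' in the paper) nor the existence and characterisation of free traced symmetric monoidal categories from the cited literature. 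If you carry your plan out, be aware that points (i) and (ii) are the substance of the proof rather than bookkeeping; both amount to establishing the same normal-form and decomposition facts that the paper's pair of functors encode.
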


\begin{proof}
  Since the free category is characterised up to equivalence, let us show an equivalence (actually an isomorphism) between $\DotCat(\Sigma)$ and a more `obvious' representation of the free category. Let $\Sigma'$ be a signature containing $\Sigma$ and additional maps $(\mult, \unit, \comult, \counit)$ for each $A$ in the objects of $\Sigma$. Let $\Free(\Sigma')$ be the free traced symmetric monoidal category of $\Sigma'$ and let $\Free(\Sigma')_\equiv$ be the same, but with morphisms taken modulo the SCFA equations. This becomes a dagger-hypergraph category in the obvious way, and by construction, forms the \textit{free} dagger-hypergraph category.

  Since $\Free(\Sigma')$ is the free traced SMC, we can take its morphisms to be string diagrams. Define a functor $F : \Free(\Sigma')_\equiv \to \DotCat(\Sigma)$ that is identity-on-objects. For morphisms, it sends each box in $\Sigma$ to itself, each connected component of morphisms in $\Sigma' \backslash \Sigma$ to a dot, and each `blank' wire (i.e. a wire not otherwise touching a morphism in $\Sigma' \backslash \Sigma$) to a dot with one input and one output.
  \[ F \ ::\ \tikzfig{dot-diagram-expanded} \quad \mapsto \quad \tikzfig{dot-diagram-example} \]
  By Theorem~\ref{thm:spider}, any two connected diagrams built from $(\mult, \unit, \comult, \counit)$ are equal, so this mapping does not depend on the choice of representative in $\Free(\Sigma')_\equiv$. We can show this preserves composition by again invoking Theorem~\ref{thm:spider}, and it preserves the rest of the dagger-hypergraph structure by definition of the respective categories. In the other direction, define a functor $G$ which is also identity on objects and sends a dot-diagram to a string diagram with each dot replaced by a canonical tree/cotree $S_m^n$ (as in Definition~\ref{def:spider}) built from the SCFA structure.
  \[ G \ ::\ \tikzfig{dot-diagram-example} \quad \mapsto \quad \tikzfig{dot-diagram-S-mn} \]
  Again we can proof dagger-hypergraph functoriality by invoking Theorem~\ref{thm:spider}. The round-trip $F \circ G$ on morphism in $\DotCat(\Sigma)$ sends a dot-diagram to the identical dot-diagram, so $F \circ G = 1_{\DotCat(\Sigma)}$. The other round-trip $G \circ F$ sends a string diagram to another string diagram, where all connected components of $\Sigma' \backslash \Sigma$ are sent to a canonical form w.r.t. to the SCFA axioms. Clearly this is in the same $\equiv$-equivalence class as the original morphism, so $G \circ F = 1_{\Free(\Sigma')_\equiv}$.
\end{proof}

\section{The category $\Mat(R)$}\label{sec:matrix-category}

While it is more typical to define a category of matrices whose objects are natural numbers and whose matrices are indexed by sets of the form $\{ 1, \ldots, m \}$, it will be more convenient for our purposes to use an equivalent category with arbitrary finite sets as indices. It will also be convenient for the proofs to define $\Mat(R)$ for all unital semirings, not just fields. Thus, to fix notation, we will now define this version of $\Mat(R)$ and give its (dagger-)hypergraph structure.

Let $R$ be a unital semiring with a (possibly trivial) involution operation $\overline{(-)}$. Let $\Mat(R)$ be the category whose objects are finite sets ${\mathcal I},{\mathcal J},\ldots$ and whose morphisms $\psi : {\mathcal I} \to {\mathcal J}$ are $|{\mathcal J}| \times |{\mathcal I}|$ matrices, i.e. matrices whose rows are indexed by $i \in {\mathcal I}$ and whose columns are indexed by $j \in {\mathcal J}$, with composition and identities defined as usual.
\[
(g \circ f)_i^k := \sum_k f_i^j g_j^k
\qquad\qquad
(1_{\mathcal I})_i^j := \begin{cases}
1 & \textrm{ if } i = j \\
0 & \textrm{ otherwise}
\end{cases}
\]
Where the juxtaposition $f_i^k g_k^l$ means multiplication in $R$. Note we have adopted tensor notation where inputs/columns appear as lower indices and outputs/rows appear as upper indices:
\[
\psi = \left(\begin{matrix}
  \psi_0^0 & \psi_1^0 \\
  \psi_0^1 & \psi_1^1
\end{matrix}\right)
\]
We define a monidal product on objects as ${\mathcal I} \otimes {\mathcal J} := {\mathcal I} \times {\mathcal J}$ and on morphisms as the \textit{Kronecker product} of matrices:
\[
  (\psi \otimes \phi)_{(i,j)}^{(k,l)} = \psi_i^k \phi_j^l
\]
Note that we typically drop brackets and commas, when there can be no confusion:
\[ \psi_{i_1...i_m}^{j_1...j_n} \]
$\Mat(K)$ is a dagger-monoidal category, letting:
\[ (\psi^\dagger)_i^j := \overline{(\psi_j^i)} \]
The hypergraph structure is given by `generalised Kronecker delta' matrices.
\begin{align*}
  \mu_{ij}^k & = \begin{cases}
    1 & \textrm{ if } i = j = k \\
    0 & \textrm{ otherwise}
  \end{cases} &
  \eta^i & = 1 \\
  \delta_{i}^{jk} & = \begin{cases}
    1 & \textrm{ if } i = j = k \\
    0 & \textrm{ otherwise}
  \end{cases} &
  \epsilon_i & = 1
\end{align*}
Clearly any connected diagram of these matrices just becomes a bigger Kronecker delta, with the general case being:
\[
  (S_m^n)_{i_1...i_m}^{j_1...j_n} = \begin{cases}
    1 & \textrm{ if for some $k$, and all $\alpha$:\  } i_\alpha = j_\alpha = k \\
    0 & \textrm{ otherwise}
  \end{cases}
\]
From this and the observation that $S_1^1$ is the identity matrix, all the SCFA identities follow.

It is possible to characterise functors out of the free hypergraph category in terms of matrix (i.e. tensor) contraction. For a dot diagram $F$ and a hypergraph functor $M : \DotCat(\Sigma) \to \Mat(R)$, let
\[ \textrm{Idx}(F,M) := \prod_{d\in D^F} M(\Ldot^F(d)) \]
be the set of \textit{indexing functions} for $F$. The elements of $\textrm{Idx}(F,M)$ can be seen as tuples, but for out purposes, it will be more convenient to write them using (dependent) function notation. That is, each $\phi \in \textrm{Idx}(F,M)$ assigns an element $\phi(d) \in M(\Ldot^F(d))$ to each dot in $F$.

\begin{theorem}\label{thm:contraction}
  Let $F : I \to I$ be a morphism in $\DotCat(\Sigma)$ represented by a closed, simple dot-diagram (up to isomorphism), and let $M$ be a hypergraph functor. Then:
  \[
  M(F) =
  \sum_{\phi \in \textrm{Idx}(F,M)} \prod_{b \in B^F}
  M(\Lbox^F(b))_{\phi(\Win^F(b,1)) , ... , \phi(\Win^F(b,m))}^{\phi(\Wout^F(1,b)), ... , \phi(\Wout^F(n,b))}
  \]
  where $\sum$ and $\prod$ are sums and products in $K$, respectively.
\end{theorem}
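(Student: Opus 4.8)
The plan is to reduce $M(F)$ to an explicit tensor contraction and then collapse the Kronecker deltas contributed by the dots. First I would invoke functoriality: since $M$ is a hypergraph functor it sends each box $b$ to the matrix $M(\Lbox^F(b))$, and it sends each dot $d$ --- which in $\DotCat(\Sigma)$ is exactly the unique connected SCFA-diagram on the primitive object $\Ldot^F(d)$, i.e.\ a spider --- to the generalized Kronecker delta on the finite set $M(\Ldot^F(d))$ computed immediately above. Because $F$ is assembled, modulo the SCFA equations that $M$ respects, by wiring boxes and spiders together, $M(F)$ is the $\Mat(R)$-morphism obtained by connecting the matrices $M(\Lbox^F(b))$ and the delta tensors $M(d)$ according to the combinatorics encoded in $\Win^F$ and $\Wout^F$.

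Next I would evaluate this network. Since $F$ is closed we have $\Inputs^F=\{(b,i)\}$ and $\Outputs^F=\{(j,b)\}$, so every leg of every box is wired to exactly one dot and, in the corresponding string diagram, joined to exactly one leg of that dot's spider. Assigning an index to each wire and inserting a resolution of the identity on each wire gives
\[
M(F)=\sum_{a,c}\Big(\prod_{b\in B^F} M(\Lbox^F(b))^{\,c_{1,b},\dots,c_{n,b}}_{\,a_{b,1},\dots,a_{b,m}}\Big)\prod_{d\in D^F}\Delta_d,
\]
where $a_{b,i}\in M(\dom(\Lbox^F(b))[i])$ labels each box-input, $c_{j,b}\in M(\cod(\Lbox^F(b))[j])$ labels each box-output, and $\Delta_d$ is the entry of $M(d)$, equal to $1$ precisely when all indices of legs incident to $d$ agree and $0$ otherwise. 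This is the Einstein-summation principle for $\Mat(R)$, which I would establish by induction on the number of wires, each step expanding one wire via $1_{\mathcal I}=\sum_i(\text{state})\circ(\text{effect})$.

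Finally I would collapse the deltas. For each dot $d$ the factor $\Delta_d$ forces the indices of all its incident legs --- the $a_{b,i}$ with $\Win^F(b,i)=d$ together with the $c_{j,b}$ with $\Wout^F(j,b)=d$ --- to share a single value $\phi(d)\in M(\Ldot^F(d))$. Simplicity, i.e.\ surjectivity of $\Win^F$ and $\Wout^F$, ensures every dot has at least one incident leg, so the labelings surviving the deltas correspond bijectively to functions $\phi\in\textrm{Idx}(F,M)=\prod_{d}M(\Ldot^F(d))$. Substituting $a_{b,i}=\phi(\Win^F(b,i))$ and $c_{j,b}=\phi(\Wout^F(j,b))$ turns each box factor into the entry $M(\Lbox^F(b))^{\phi(\Wout^F(1,b)),\dots,\phi(\Wout^F(n,b))}_{\phi(\Win^F(b,1)),\dots,\phi(\Win^F(b,m))}$, yielding exactly the claimed formula.

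The hard part will be the middle step: rigorously identifying the abstract image $M(F)$ with the concrete Einstein sum. This requires using that $F$ is closed and simple to guarantee that the wiring is a genuine matching of box-legs to spider-legs with no dangling ends and no free-floating dots, so that the resolution-of-identity induction is well-founded and terminates at precisely the stated contraction; once this is in place, the delta-collapse is a routine reindexing.
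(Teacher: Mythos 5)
Your proposal is correct and takes essentially the same route as the paper: both interpret $M(F)$ as a tensor network of box matrices and spider tensors (the generalized Kronecker deltas $S_m^n$), write its value as an Einstein sum over one index per wire, and then collapse the delta factors so that all wires incident to a dot carry a single shared index $\phi(d)$, which is exactly the stated formula. The only difference is emphasis: the paper simply appeals to the standard tensor-contraction reading of string diagrams in $\Mat(R)$, whereas you propose to justify that step by a resolution-of-identity induction on wires, which is a reasonable way of filling in the same routine detail.
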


\begin{proof}
  The only difference between this interpretation and the usual interpretation of a string diagram as a tensor contraction is that a single index can be repeated any number of times, not just as a single pair of upper and lower indices.

  Rather that summing over dots, we could have instead summed over individual wires. Since no wire is connected to more than one box, it is uniquely identified by being the $i$-th input or $j$-th output of box $b$. Each of these wires is then connected to a map of the form $S_m^n$.
  \[
  \sum_{(b,1)...(b,m_b),(1,b)...(n_b,b),...} \left(\prod_{b \in B^F}
  M(\Lbox^F(b))_{(b,1)...(b,m_b)}^{(1,b)...(n_b,b)} \right)
  \left(\prod_{d \in D^F}
  (S_{m_d}^{n_d})_{\Wout^{-1}(d)}^{\Win^{-1}(d)} \right)
  \]
  By commutativity, the order of the indices on the $S$-maps doesn't matter, so we've abused notation by writing them as the appropriate sets. Since the maps $S_m^n$ are generalised Kronecker deltas, we can simplify this expression by removing redundant indices e.g.
  \[
  \sum_{ijk} f_{ij}^{k} g_{k}  (S_{2}^{1})_{ij}^{k} = 
  \sum_{i} f_{ii}^{i} g_{i}
  \]
  So we are left with one distinct index corresponding to each dot, and the indices which used to be labelled by wires are now labelled according to the dot each wire was connected to. This is precisely the form stated in the theorem.
\end{proof}

\section{Completeness of hypergraph categories}\label{sec:completeness}

In this section, we will prove the completeness theorem for hypergraph categories, without the dagger structure. In the next section, we will show the dagger case by tweaking the proof a little bit. Throughout this and the next section, let $K$ be a field of characteristic $0$. We begin by proving the simple, closed case, then generalising via corollaries.


\begin{theorem}\label{thm:hypergraph-completeness}
  Two simple, closed dot-diagrams $F$, and $G$ are isomorphic iff for all hypergraph functors $\llbracket-\rrbracket : \DotCat(\Sigma) \to \Mat(K)$, $\llbracket F \rrbracket = \llbracket G \rrbracket$.
\end{theorem}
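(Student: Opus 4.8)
The plan is to split the biconditional into an easy soundness direction and a substantial completeness direction, with all the real work in the latter. Soundness (the ``only if'') is immediate: morphisms of $\DotCat(\Sigma)$ are isomorphism classes of dot-diagrams, so $F \cong G$ means $F = G$ as a morphism, and any functor sends a morphism to a well-defined matrix; hence $\llbracket F \rrbracket = \llbracket G \rrbracket$ automatically.

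For completeness I would argue by contraposition: assuming $F \not\cong G$, I will exhibit a single hypergraph functor with $\llbracket F \rrbracket \neq \llbracket G \rrbracket$. The engine is Theorem~\ref{thm:contraction}: since $F$ and $G$ are closed and simple, for any hypergraph functor $M$ the scalars $M(F), M(G)$ are the tensor contractions $\sum_{\phi \in \textrm{Idx}(F,M)} \prod_{b \in B^F} M(\Lbox^F(b))^{\ldots}_{\ldots}$, where each indexing function $\phi$ assigns a ``colour'' $\phi(d)$ to every dot. I would fix, for every object $A$, an index set of a common large size $N \geq \max(|D^F|, |D^G|)$, and send each generator $g$ of $\Sigma$ to a matrix of the appropriate shape whose entries are formal indeterminates $x^{(g)}_{\vec a;\vec c}$. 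By freeness of $\DotCat(\Sigma)$ this data determines a hypergraph functor, and by the contraction formula $\llbracket F \rrbracket$ becomes a polynomial $P_F \in \mathbb{Z}[x^{(g)}_{\vec a;\vec c}]$ whose monomials are indexed by the indexing functions $\phi$ and whose coefficients are positive integers (the number of $\phi$ producing a given monomial); likewise $P_G$.

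The crux, and the step I expect to be the main obstacle, is to show that $F \mapsto P_F$ is a complete isomorphism invariant, so that $F \not\cong G$ forces $P_F \neq P_G$. First, every monomial of $P_F$ is a product of exactly $|B^F|$ indeterminates, one per box, so grouping factors by their superscript $g$ recovers $|B^F|$ together with $\Lbox^F$ up to relabelling of boxes. Next I would isolate the monomials in which the maximal number of distinct colours occurs. Because $N \geq |D^F|$ there exist \emph{injective} indexing functions, and because $F$ is \emph{simple} the wiring functions are surjective, so every dot meets at least one port and its colour genuinely appears among the indices; hence the maximal number of distinct colours is exactly $|D^F|$, attained precisely by the injective $\phi$. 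Such an extremal monomial records, for each box, its label and the colours of all its ports, with two ports sharing a colour exactly when they meet a common dot — a complete description of the dot-diagram in which the dots are named by colours. If $P_F = P_G$ then $F$ and $G$ share an extremal monomial, and matching the two descriptions yields a bijection of boxes (from equality of the variable multisets) together with a bijection of dots (from the shared colour set), respecting labels and all port-to-dot attachments; this is exactly a dot-diagram isomorphism, so $P_F = P_G$ would give $F \cong G$, contradicting our assumption.

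Finally I would turn the inequality of formal polynomials into an inequality of actual matrices. Since $F \not\cong G$ gives $P_F \neq P_G$, the difference $P_F - P_G$ is a nonzero polynomial with integer coefficients; as $\mathrm{char}\,K = 0$ the integers embed into $K$ and these coefficients stay nonzero, and as $K$ is infinite a nonzero polynomial is not the zero function, so some assignment of the indeterminates to elements of $K$ makes $P_F$ and $P_G$ evaluate differently. Substituting those values for the formal entries specifies a genuine hypergraph functor $\llbracket - \rrbracket : \DotCat(\Sigma) \to \Mat(K)$ with $\llbracket F \rrbracket \neq \llbracket G \rrbracket$, completing the contrapositive and hence the theorem.
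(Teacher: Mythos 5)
Your proof is correct, but it takes a genuinely different route from the paper's. Both arguments share the same skeleton: use Theorem~\ref{thm:contraction} to express $\llbracket F \rrbracket$ and $\llbracket G \rrbracket$ as polynomials in formal indeterminates, show the polynomials differ whenever $F \not\cong G$, and then use characteristic $0$ (hence infinitude) of $K$ to evaluate the indeterminates so that the difference survives. The divergence is in the middle step. The paper's functor is \emph{tailored to $F$}: each object $A$ is sent to the set $\Ldot^{-1}(A)$ of $A$-labelled dots of $F$, and a generator $f$ is sent to $\sum_b f_b$, with one variable $X_b$ per box $b$ of $F$ labelled $f$, placed at the unique matrix entry matching $b$'s actual wiring in $F$. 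The coefficient of $\prod_b X_b$ in $\llbracket G \rrbracket_F$ (the ``magic coefficient'') then counts homomorphisms $G \to F$ whose box function is a bijection, and Theorem~\ref{thm:surjection-lift}, together with a preliminary ``dot-counting'' functor forcing $|D^F| = |D^G|$, upgrades these to isomorphisms: the coefficient is $|\textrm{Aut}(F)| \geq 1$ for $F$ itself and $0$ for any non-isomorphic $G$. Your functor is instead \emph{generic} and symmetric in the pair --- index sets of common size $N \geq \max(|D^F|,|D^G|)$ and a fresh indeterminate in every matrix entry --- and your key lemma is that the extremal monomials (those with the maximal number of distinct colours, which exist because all coefficients are positive so nothing cancels, and which see every dot because the diagrams are simple) reconstruct the diagram up to isomorphism. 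Your route thereby avoids both the dot-counting functor (equality of maximal colour counts already gives $|D^F| = |D^G|$) and Theorem~\ref{thm:surjection-lift}; what the paper's route buys in exchange is a much smaller model (variables indexed by boxes rather than by entries), the sharper by-product that the distinguishing coefficient literally equals $|\textrm{BBij}(G,F)|$ --- a homomorphism-counting statement --- and a construction that transfers almost verbatim to the dagger case in Theorem~\ref{thm:dagger-hypergraph-completeness}. Two details you should make explicit in a full write-up: ``injective'' must mean injective into a common colour set (or else colours must be regarded as typed), so that two ports share a colour precisely when they are wired to the same dot; and you should note that a shared extremal monomial necessarily arises in $P_G$ from an injective indexing function of $G$, which is what licenses reading off the dot bijection from the shared colour set.
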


\begin{proof}
  Suppose for simple, closed dot-diagrams $F$ and $G$ that $F \not\cong G$. Then, we need to show that there exists some functor $\llbracket-\rrbracket : \DotCat(\Sigma) \to \Mat(K)$ such that $\llbracket F \rrbracket \neq \llbracket G \rrbracket$. Suppose firstly that they have a different number of dots. Then, we can distinguish them by the `dot-counting' functor. This sends every object in $\Sigma$ to a two-element set, and every morphism to the following matrix:
  \[ (\llbracket f \rrbracket^d)_{x_1,...x_m}^{y_1,...,y_n} = 1 \]
  Then, using the form of evaluation given by Theorem~\ref{thm:contraction}, we can compute:
  \[
  \llbracket F \rrbracket^d \ =
  \sum_{\phi \in \textrm{Idx}(F, \llbracket \rrbracket^d)}
   \prod_{b \in B^F}
   (\llbracket \Lbox(b)^F \rrbracket^d)_{...}^{...} \ =
  \sum_{\phi \in \textrm{Idx}(F, \llbracket \rrbracket^d)} 1 \ =\ 
  |\textrm{Idx}(F, \llbracket \rrbracket^d)| \ =\  2^{|D^F|}
  \]
  If $F$ and $G$ have different numbers of dots, $\llbracket F\rrbracket^d \neq \llbracket G \rrbracket^d$. Therefore, assume $F$ and $G$ have the same number of dots. We will now construct a functor $\llbracket - \rrbracket_{FG}$ that distinguishes them. We do this in two phases, first let $\mathbf{X} := \{ X_b \ |\  b \in B^F \}$ be a set of variables indexed by the boxes in $F$ and let $\ZX$ be a polynomial ring. Then, we will construct a functor:
  \[ \llbracket - \rrbracket_{F} : \DotCat(\Sigma) \to \Mat(\ZX) \]
  On objects, let $\llbracket A \rrbracket_F$ be the (finite) set of all wires in $F$ labelled by $A$:
  \[ \llbracket A \rrbracket_F := \Ldot^{-1}(A) \]
  For each box $b$ labelled by $f$, let:
  \[ (f_b)_{x_1,...,x_m}^{y_1,...y_n} := \begin{cases}
      X_b & \textrm{ if }
        x_i = \Win^F(b,i) \textrm{ and }
        y_j = \Wout^F(j,b) \\
      0 & \textrm{ otherwise}
  \end{cases} \]
  The interpretation itself is then defined as a sum over all of the boxes in $F$ labelled $f$:
  \[ \llbracket f \rrbracket_F :=
        \sum_{b \in (\Lbox^F)^{-1}(f)} f_b
  \]
  We can then compute $\llbracket G \rrbracket_F$ by writing it in the form given by Theorem~\ref{thm:contraction}.
  \begin{align*}
    \llbracket G \rrbracket_F & =
    \sum_{\phi \in \textrm{Idx}(G,\llbracket\rrbracket_F)}
    \prod_{b \in B^G}
    (\llbracket\Lbox^G(b)\rrbracket_F)_{\phi(\Win^G(b,1)) , ... , \phi(\Win^G(b,m))}^{\phi(\Wout^G(1,b)), ... , \phi(\Wout^G(n,b))} \\
    & =
    \sum_{\phi \in \textrm{Idx}(G,\llbracket\rrbracket_F)}
    \prod_{b \in B^G}
    \sum_{b' \in (\Lbox^F)^{-1}(\Lbox^G(b))}
    (f_{b'})_{\phi(\Win^G(b,1)) , ... , \phi(\Win^G(b,m))}^{\phi(\Wout^G(1,b)), ... , \phi(\Wout^G(n,b))} \\
    & =
    \sum_{\phi \in \textrm{Idx}(G,\llbracket\rrbracket_F)}
    \prod_{b \in B^G}
    \sum_{b' \in B^G}
    \begin{cases}
    X_{b'}
      & \textrm{if } \Lbox^F(b') = \Lbox^G(b) \textrm{ and } \\
      & \phi(\Win^G(i,b)) = \Win^F(i,b') \textrm{ and } \\
      & \phi(\Wout^G(b,j)) = \Wout^F(b',j) \textrm{ for all } i, j \\
    0 & \textrm{otherwise}
    \end{cases}
  \end{align*}
  Applying distributivity yields:
  \begin{equation}\label{eq:big-sum}
    \llbracket G \rrbracket_F =
    \sum_{\phi \in \textrm{Idx}(G,\llbracket\rrbracket_F)}
    \sum_{\psi : B^G \to B^F}
    \prod_{b \in B^G}
    \begin{cases}
    X_{\psi(b)}
      & \textrm{if } \Lbox^F(\psi(b)) = \Lbox^G(b) \textrm{ and } \\
      & \phi(\Win^G(i,b)) = \Win^F(i,\psi(b)) \textrm{ and } \\
      & \phi(\Wout^G(b,j)) = \Wout^F(\psi(b),j) \textrm{ for all } i, j \\
    0 & \textrm{otherwise}
    \end{cases}
  \end{equation}
  We will refer to the coefficient of $\llbracket G \rrbracket_F$ corresponding of $\prod_b X_b$ as the `magic coefficient'. Clearly if two dot-diagrams $G, G'$ have different magic coefficients, then the polynomials $\llbracket G \rrbracket_F$ and $\llbracket G' \rrbracket_F$ will not be equal.

  Why do we call it the magic coefficient? Since the summation in~\eqref{eq:big-sum} is over a pair of functions $\phi, \psi$, computing the value of the magic coefficient amounts to identifying for which $\psi,\phi$ the following conditions are satisfied exactly once for each box $b' \in B^F$:
  \[ \psi(b) = b' \qquad
     \Lbox^F(\psi(b)) = \Lbox^G(b) \qquad
     \phi(\Wout^G(b,j)) = \Wout^F(\psi(b),j)
  \]
  The latter two conditions give precisely what it means for $(\psi,\phi)$ to be a homomorphism of dot-diagrams. As the product ranges over $b \in B^G$, the fact that these are satisfied exactly once means that $\psi$ is a bijection of boxes. Let $\textrm{BBij}(G,F)$ be the set of homomorphisms $(\psi,\phi)$ from $G$ to $F$ such that the box function is a bijection. It follows that the magic coefficient of $\llbracket G \rrbracket_F$ is $|\textrm{BBij}(G,F)|$.

  In particular, $\psi$ is a surjection, so by Theorem~\ref{thm:surjection-lift}, so too is $\phi$. But, since $F$ and $G$ have the same number of dots, a surjection from $G$ to $F$ is actually an isomorphism of dot-diagrams. Since we assumed $F \not\cong G$, the magic coefficient of $\llbracket G \rrbracket_F$ must be $0$, whereas the magic coefficient of $\llbracket F \rrbracket_F$ is $|\textrm{Aut}(F)|$. From this, we can conclude
  \[ \llbracket F \rrbracket_F \neq \llbracket G \rrbracket_F \]

  It only remains to turn this into a functor into $\Mat(K)$, rather than $\Mat(\ZX)$. Any ring homomorphism $h : \ZX \to K$ extends in the obvious way to a functor from $\Mat(\ZX)$ to $\Mat(K)$. In particular, any choice of values for the $X_b$ induces an evaluation functor $\textrm{ev} : \Mat(\ZX)\to \Mat(K)$. This defines a functor in $\Mat(K)$ by:
  \[ \llbracket - \rrbracket_{FG} := \textrm{ev}(\llbracket - \rrbracket_{F}) \]
  Since the polynomial $p := \llbracket F \rrbracket_F - \llbracket G \rrbracket_F$ is non-zero, it has finitely many roots in $K$. Since $K$ is infinite, choose values for $X_b$ in $K$ such that $p$ is non-zero. Then $\llbracket F \rrbracket_{FG} - \llbracket G \rrbracket_{FG} \neq 0$, and hence $\llbracket F \rrbracket_{FG} \neq \llbracket G \rrbracket_{FG}$.
\end{proof}

Extending to all morphisms in $\DotCat(\Sigma)$ is now straightforward. First, we eliminate the `simple' condition.

\begin{corollary}\label{cor:non-simple}
  Two closed morphisms $F$ and $G$ in $\DotCat(\Sigma)$ are equal iff for all hypergraph functors $\llbracket-\rrbracket : \DotCat(\Sigma) \to \Mat(K)$, $\llbracket F \rrbracket = \llbracket G \rrbracket$.
\end{corollary}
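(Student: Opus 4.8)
The plan is to bootstrap from Theorem~\ref{thm:hypergraph-completeness} by isolating the \emph{free-floating} dots -- those hit by neither $\Win$ nor $\Wout$ -- from the rest of the diagram. Every closed dot-diagram $F$ factors as a monoidal product $F = F_s \otimes F_{\textrm{fr}}$, where $F_s$ is the simple closed diagram obtained by deleting the free-floating dots and $F_{\textrm{fr}}$ is a disjoint union of isolated dots. Writing $n_A(F)$ for the number of free dots of type $A$ and $m_A(F)$ for the total number of dots of type $A$, any isomorphism restricts to an isomorphism $F_s \cong G_s$ together with a type-preserving bijection of free dots; hence $F \cong G$ iff $F_s \cong G_s$ and $n_A(F) = n_A(G)$ for all $A$. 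Since every hypergraph functor is monoidal and closed diagrams are sent to scalars, $\llbracket F \rrbracket = \llbracket F_s \rrbracket \cdot \prod_A |\llbracket A \rrbracket|^{\,n_A(F)}$. The forward direction is immediate from functoriality, so I would assume $F \not\cong G$ and build a distinguishing functor.

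First I would dispose of the two ``counting'' coordinates, using the fact (from $\DotCat(\Sigma)$ being free) that an arbitrary assignment of matrices to the generators of $\Sigma$ extends to a unique hypergraph functor, the SCFA structure being sent to the canonical Kronecker deltas. Assigning every object a finite set of size $s_A$ and every box the all-ones matrix gives, by a direct computation as in Theorem~\ref{thm:contraction} (with free dots contributing independent index-sums), $\llbracket F \rrbracket = \prod_A s_A^{\,m_A(F)}$; assigning instead the $0/1$ matrix supported on the single entry where every input and output index equals a fixed basepoint forces all box-touching dots to the basepoint and leaves free dots unconstrained, yielding $\llbracket F \rrbracket = \prod_A s_A^{\,n_A(F)}$. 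These are monomials in the $s_A$, so if either the total multiset $(m_A(F))_A$ or the free multiset $(n_A(F))_A$ differs from that of $G$, a suitable integer choice of sizes separates $F$ and $G$. Thus I may assume both multisets agree, which forces $m_A(F_s) = m_A(G_s)$ for every $A$ and, combined with $F \not\cong G$, forces $F_s \not\cong G_s$.

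It remains to separate $F_s$ and $G_s$, two simple closed diagrams with equal dot-type counts, while keeping the free-dot factors under control. Here I would run the polynomial construction of Theorem~\ref{thm:hypergraph-completeness}, but with each object $A$ interpreted as the set $(\Ldot^F)^{-1}(A)$ of \emph{all} dots of $F$ of type $A$ (free dots included) rather than only the box-touching ones. The box matrices are indicators of $F$'s actual wiring, so they never select the extra free elements; the coefficient of $\prod_b X_b$ in $\llbracket G \rrbracket$ is therefore still $|\textrm{BBij}(G_s,F_s)|$ times the free factor, and since $F_s$ and $G_s$ have the same number of dots, Theorem~\ref{thm:surjection-lift} upgrades any box-bijective homomorphism to an isomorphism, so this coefficient is $0$ while the corresponding one for $F$ is $|\textrm{Aut}(F_s)|$ times the same factor. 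The main obstacle is precisely this last bookkeeping: the free-dot factor $\prod_A |(\Ldot^F)^{-1}(A)|^{\,n_A(\cdot)}$ must be (i) nonzero, which is exactly why the object sets are taken to be all of $F$'s dots, so that a free dot of type $A$ guarantees a nonempty set, and (ii) equal for $F$ and $G$, which holds because we have already arranged $n_A(F) = n_A(G)$. With the two magic coefficients distinct, the difference polynomial is nonzero, and evaluating at a non-root in the infinite field $K$ -- exactly as at the end of Theorem~\ref{thm:hypergraph-completeness} -- yields the required functor into $\Mat(K)$.
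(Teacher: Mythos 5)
Your proof is correct, and its overall architecture is the same as the paper's: factor each closed diagram as a simple part tensored with free dots, dispose of mismatched dot counts with counting functors, and separate non-isomorphic simple parts with the polynomial functor of Theorem~\ref{thm:hypergraph-completeness}. The one substantive difference is your handling of the free-dot factor, and there your version is tighter than the paper's. After reducing to the case $d_F \cong d_G$, the paper reuses the functor $\llbracket - \rrbracket_{F'G'}$ exactly as constructed in Theorem~\ref{thm:hypergraph-completeness}, whose object interpretation $\llbracket A \rrbracket = (\Ldot^{F'})^{-1}(A)$ contains only dots of the simple part $F'$, and then asserts that $\llbracket d_F \rrbracket_{F'G'} \neq 0$ is ``easy to check''. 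That claim can fail: if some free dot of $F$ has a type $A$ labelling no dot of $F'$, then $\llbracket A \rrbracket_{F'G'} = \emptyset$, so $\llbracket F \rrbracket_{F'G'} = \llbracket G \rrbracket_{F'G'} = 0$ and the functor separates nothing. Your interpretation $\llbracket A \rrbracket := (\Ldot^F)^{-1}(A)$, taking \emph{all} dots of $F$ of type $A$ including the free ones, is exactly the repair needed, and you verify the two points it requires: the free factor is nonzero, and enlarging the index sets does not disturb the magic coefficient, since simplicity of $G_s$ forces every nonvanishing indexing function to land in box-touching dots, so the coefficient of $\prod_b X_b$ is still $|\textrm{BBij}(G_s,F_s)|$ (times the common free factor), which vanishes via Theorem~\ref{thm:surjection-lift} just as in the simple case. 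Your two preliminary counting functors (all-ones matrices for typed total counts, basepoint indicators for typed free counts) accomplish what the paper's prime-weighted functor $\llbracket - \rrbracket^f_{FG}$ does, merely packaged differently. In short: same route as the paper, but your proof closes a genuine gap that the paper's ``easy to check'' step leaves open.
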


\begin{proof}
  Suppose $F$ and $G$ are closed, but not simple. The following functor counts the `free dots' of each type in $F$ and $G$, i.e. those not connected to a box. Fix a set of distinct prime numbers $\{ p_A \}$ for each object $A$ labelling a dot in $F$ or $G$.
  \[
  (\llbracket f \rrbracket_{FG}^f)_{i_1...i_m}^{j_1...j_n} = 
  \begin{cases}
    1 & \textrm{ if } i_k = j_l = 1 \\
    0 & \textrm{ otherwise}
  \end{cases}
  \]
  This sends every simple closed diagram to $1$ and closed diagram consisting of a single free dot of type $A$ to the number $p_A$. Let $F = F' \otimes d_F$ and $G = G' \otimes d_G$, where $F', G'$ are simple, and $d_F, d_G$ consist only of free dots. Then:
  \[ \llbracket F \rrbracket_{FG}^f =
     \llbracket F' \rrbracket_{FG}^f \llbracket d_F \rrbracket_{FG}^f = 
     \llbracket d_F \rrbracket_{FG}^f = (p_A)^{N_A} (p_B)^{N_B} (p_C)^{N_C} \ldots
  \]
  where $N_A$ is the number of free dots of type $A$. If $d_F \not\cong d_G$, then
  \[ \llbracket F \rrbracket_{FG}^f = \llbracket d_F \rrbracket_{FG}^f \neq \llbracket d_G \rrbracket_{FG}^f = \llbracket G \rrbracket_{FG}^f \]
  Otherwise, $d_F \cong d_G$. Let $\llbracket - \rrbracket_{FG}$ be the functor defined in the proof of Theorem~\ref{thm:hypergraph-completeness}. Then, it is easy to check that:
  \[ 0 \neq \llbracket d_F \rrbracket_{F'G'} = \llbracket d_G \rrbracket_{F'} \]
  from which follows that $\llbracket F \rrbracket_{F'G'} \neq \llbracket G \rrbracket_{F'G'}$.
\end{proof}

\begin{corollary}\label{cor:non-closed}
  Two morphisms $F$ and $G$ in $\DotCat(\Sigma)$ are equal iff for all hypergraph functors $\llbracket-\rrbracket : \DotCat(\Sigma) \to \Mat(K)$, $\llbracket F \rrbracket = \llbracket G \rrbracket$.
\end{corollary}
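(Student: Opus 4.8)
The plan is to reduce the general (open) case to the closed case already settled in Corollary~\ref{cor:non-simple}, by `plugging' every global input and output with a fresh generator. We may assume $F$ and $G$ are parallel, with common input word $A_1\cdots A_m$ and output word $B_1\cdots B_n$ (non-parallel morphisms are trivially distinct, and a functor injective enough on objects separates them). Enlarge the signature to $\Sigma^+ := \Sigma \cup \{\, s_1,\dots,s_m,\, t_1,\dots,t_n \,\}$, where each $s_k : I \to A_k$ and each $t_l : B_l \to I$ is a new box generator carrying its own distinct label. Put $c_{\mathrm{in}} := s_1\otimes\cdots\otimes s_m$ and $c_{\mathrm{out}} := t_1\otimes\cdots\otimes t_n$, and define the closure $\widehat F := c_{\mathrm{out}}\circ F\circ c_{\mathrm{in}}$, a \emph{closed} morphism of $\DotCat(\Sigma^+)$, and likewise $\widehat G$. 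In dot-diagram terms, $\widehat F$ is just $F$ with a fresh single-legged box glued to each former boundary wire. (When $m=0$ or $n=0$ we add no boxes on that side; nothing changes.)

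The first key step is that closure is injective on isomorphism classes: $\widehat F \cong \widehat G$ if and only if $F \cong G$. The forward implication is the only content. Since none of the labels $s_k, t_l$ occurs in $\Sigma$, any dot-diagram homomorphism $\Phi=(\Phi_b,\Phi_d) : \widehat F \to \widehat G$ must send each boundary box to the box of $\widehat G$ bearing the same label; and the wiring conditions $\Win^{\widehat G}(\Phi_b(b),i) = \Phi_d(\Win^{\widehat F}(b,i))$ and $\Wout^{\widehat G}(j,\Phi_b(b)) = \Phi_d(\Wout^{\widehat F}(j,b))$ then force $\Phi_d$ to carry the dot attached to the $k$-th input of $F$ to the dot attached to the $k$-th input of $G$, and similarly for outputs. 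Hence, after deleting the boundary boxes, $\Phi$ restricts to a homomorphism $F \to G$, and an isomorphism restricts to an isomorphism; the converse extension is immediate. Thus $F \not\cong G$ implies $\widehat F \not\cong \widehat G$.

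With this faithfulness in hand, suppose $F \not\cong G$, so $\widehat F \not\cong \widehat G$ in $\DotCat(\Sigma^+)$. As $\widehat F, \widehat G$ are closed, Corollary~\ref{cor:non-simple} (applied over $\Sigma^+$) yields a hypergraph functor $\llbracket-\rrbracket^+ : \DotCat(\Sigma^+) \to \Mat(K)$ with $\llbracket \widehat F\rrbracket^+ \neq \llbracket \widehat G\rrbracket^+$. Restricting $\llbracket-\rrbracket^+$ along the inclusion hypergraph functor $\DotCat(\Sigma)\hookrightarrow\DotCat(\Sigma^+)$ gives a hypergraph functor $\llbracket-\rrbracket : \DotCat(\Sigma)\to\Mat(K)$ agreeing with $\llbracket-\rrbracket^+$ on every $\Sigma$-morphism, in particular on $F$ and $G$. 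By functoriality,
\[ \llbracket \widehat F\rrbracket^+ = \llbracket c_{\mathrm{out}}\rrbracket^+ \circ \llbracket F\rrbracket \circ \llbracket c_{\mathrm{in}}\rrbracket^+, \]
and the same identity holds for $\widehat G$ with the \emph{same} outer factors $\llbracket c_{\mathrm{in}}\rrbracket^+, \llbracket c_{\mathrm{out}}\rrbracket^+$. Were $\llbracket F\rrbracket = \llbracket G\rrbracket$, the two right-hand sides would coincide, contradicting $\llbracket \widehat F\rrbracket^+ \neq \llbracket \widehat G\rrbracket^+$; hence $\llbracket F\rrbracket \neq \llbracket G\rrbracket$. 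This establishes the contrapositive of the nontrivial direction, the forward direction being immediate since functors are well-defined on isomorphism classes.

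The step I expect to be the main obstacle is the faithfulness of closure in the second paragraph: verifying that gluing fresh boxes never identifies non-isomorphic open diagrams, and conversely never separates isomorphic ones. This rests entirely on the freshness of the boundary labels together with the label- and wiring-preservation built into dot-diagram homomorphisms. The remaining points are routine bookkeeping: the empty-boundary edge cases, and the fact that $\widehat F$ may fail to be \emph{simple}, which is precisely why Corollary~\ref{cor:non-simple} rather than Theorem~\ref{thm:hypergraph-completeness} must be invoked.
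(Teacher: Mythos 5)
Your proposal is correct and follows essentially the same route as the paper: the paper likewise closes $F$ and $G$ by adjoining fresh generator boxes ($i_A : I \to A$ and $o_B : B \to I$, one per boundary word rather than one per wire) to an enlarged signature, applies the closed case, and transfers the separating functor back along the evident embedding $\DotCat(\Sigma) \to \DotCat(\Sigma')$. The only substantive difference is that you explicitly prove the faithfulness of closure (that $\widehat F \cong \widehat G$ forces $F \cong G$), a step the paper asserts without argument, and your proposal correctly avoids the pitfall the paper warns about, namely that closing with the SCFA units/counits instead of fresh boxes would invalidate the argument.
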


\begin{proof}
  Let $F, G : A \to B$ be two morphisms in $\DotCat(\Sigma)$. Let $\Sigma'$ be a new signature obtained by adding boxes $i_A : I \to A$ and $o_B : B \to I$. Then, there is a functor $E : \DotCat(\Sigma) \to \DotCat(\Sigma')$ defined in the obvious way.
  
  Then, as closed dot-diagrams, $o_B \circ E(F) \circ i_A \not\cong o_B \circ E(F) \circ i_A$. By the above arguments, there exists a functor $\llbracket - \rrbracket$ such that:
  \[ \llbracket o_B \circ E(F) \circ i_A \rrbracket \neq
     \llbracket o_B \circ E(G) \circ i_A \rrbracket \]
  It follows by functoriality that $\llbracket E(F) \rrbracket \neq \llbracket E(G) \rrbracket$.
\end{proof}

One might be tempted to short-circuit this step using the SCFA structure (i.e. units and counits) to provide $i_A$ and $o_B$, but this does not work. Consider the following example:
\[ \tikzfig{fdot-tens-g} \ \not\cong \ \tikzfig{f-tens-gdot} \]
If we attach dots everywhere:
\[ \tikzfig{fdot-tens-gdot} \ \cong \ \tikzfig{fdot-tens-gdot} \]
the above proof would not be valid. Compare this to attaching new, freely-added boxes:
\[ \tikzfig{fdot-tens-g-free} \ \not\cong \ \tikzfig{f-tens-gdot-free} \]

\section{Completeness of dagger-hypergraph categories}\label{sec:dagger-completeness}

We now turn to proving the completeness theorem of dagger-hypergraph categories for $\Mat(K)$, where $K$ is a field with non-trivial involution. Note that it is necessary take a non-trivial involution, otherwise new equations become true in all models. For example, scalars $s : I \to I$ would automatically satisfy $s^\dagger = s$ in all models, which is not provable by the dagger-hypergraph axioms. Also, the `tranposition' defined in terms of the Frobenius structure would become equal to the dagger:
\[ \left( \tikzfig{f} \right)^\dagger \ =\ \tikzfig{f-trans} \]
which again is not provable by the dagger-hypergraph axioms. This is natural to consider these two expressions not to be equal because the LHS computes the \textit{conjugate-transpose} whereas the RHS computes the \textit{transpose}.

\begin{theorem}\label{thm:dagger-hypergraph-completeness}
  Two dot-diagrams $F$, and $G$ are isomorphic iff for all dagger-hypergraph functors $\llbracket-\rrbracket : \DotCat(\Sigma) \to \Mat(K)$, $\llbracket F \rrbracket = \llbracket G \rrbracket$.
\end{theorem}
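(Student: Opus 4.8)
The forward direction is immediate from functoriality, so the plan is to prove the contrapositive of the hard direction by retracing the whole development of Section~\ref{sec:completeness}---Theorem~\ref{thm:hypergraph-completeness} together with Corollaries~\ref{cor:non-simple} and~\ref{cor:non-closed}---changing only the coefficient ring from $\ZX$ to $\ZXX$, now regarded as a ring equipped with the \emph{non-trivial} involution that swaps each variable $X_b$ with its formal conjugate $\overline{X_b}$. The combinatorial heart of the argument will be untouched: isomorphism of dot-diagrams is insensitive to the dagger, since a homomorphism merely preserves box-labels. The only genuinely new work is (i) arranging that the distinguishing functors land in $\Mat(\ZXX)$ as honest \emph{dagger}-hypergraph functors, and (ii) descending to $\Mat(K)$ in a way compatible with the involution on $K$.

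For the simple, closed case I would reuse the functor $\llbracket-\rrbracket_F$ from the proof of Theorem~\ref{thm:hypergraph-completeness}, assigning a fresh variable $X_b$ to each box $b$ of $F$ exactly as before, and then extend it across the dagger by decreeing $\llbracket \ell^\dagger\rrbracket_F := (\llbracket \ell\rrbracket_F)^\dagger$ for each generator $\ell$. Concretely, a box $b$ of label $\ell$ then contributes $X_b$ to $\llbracket\ell\rrbracket_F$ at its wiring positions and $\overline{X_b}$, in the conjugate-transposed slot, to $\llbracket\ell^\dagger\rrbracket_F$; one checks this is consistent when $F$ already contains boxes of label $\ell^\dagger$. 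That $\llbracket-\rrbracket_F$ is a dagger-hypergraph functor reduces to the dSCFA equations $\mu^\dagger=\delta$ and $\eta^\dagger=\epsilon$, and these hold on the nose because the interpreting matrices are the generalised Kronecker deltas of Section~\ref{sec:matrix-category}, whose entries lie in $\{0,1\}$ and are therefore fixed by the involution.

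The magic-coefficient computation then carries over verbatim. Assuming (as before) that $F$ and $G$ have the same number of dots---the unequal case being settled by the self-adjoint dot-counting functor exactly as in Theorem~\ref{thm:hypergraph-completeness}---I would evaluate $\llbracket G\rrbracket_F$ by Theorem~\ref{thm:contraction} and read off the coefficient of the un-conjugated monomial $\prod_{b\in B^F} X_b$. A box of $G$ can contribute an un-conjugated factor $X_b$ only when it is matched, \emph{with the same label}, to the box $b$ of $F$; a label-reversing match would instead produce a factor $\overline{X_b}$ and so miss this monomial. Since dot-diagram homomorphisms preserve labels, the coefficient therefore again counts the box-bijective homomorphisms $G\to F$, i.e.\ it equals $|\textrm{BBij}(G,F)|$, which by Theorem~\ref{thm:surjection-lift} is $0$ when $F\not\cong G$ and $|\textrm{Aut}(F)|>0$ when $F=G$. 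Hence $\llbracket F\rrbracket_F\neq\llbracket G\rrbracket_F$ already over $\ZXX$.

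The step I expect to be the real obstacle is the descent from $\Mat(\ZXX)$ to $\Mat(K)$. A ring homomorphism $\ZXX\to K$ inducing a \emph{dagger} functor must commute with the two involutions, so it is no longer free to send $X_b$ and $\overline{X_b}$ to unrelated values: once $X_b\mapsto\alpha_b$ we are forced to take $\overline{X_b}\mapsto\overline{\alpha_b}$. I therefore need the nonzero polynomial $p:=\llbracket F\rrbracket_F-\llbracket G\rrbracket_F$ not to vanish identically under this \emph{constrained} evaluation, and this is exactly where the hypotheses on $K$ enter. As the involution is non-trivial and $\textrm{char}\,K=0$, the fixed field $K_0$ is infinite and $K=K_0(\theta)$ with $\overline{\theta}=-\theta$; substituting $X_b=u_b+\theta v_b$ and $\overline{X_b}=u_b-\theta v_b$ is an invertible linear change of variables (valid since $2\theta\neq 0$) that turns $p$ into a nonzero polynomial in the genuinely independent indeterminates $u_b,v_b$ over $K_0$, which, $K_0$ being infinite, admits values making it nonzero. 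This produces a dagger-hypergraph functor into $\Mat(K)$ separating $F$ and $G$. Finally, the reductions removing the \emph{simple} and \emph{closed} hypotheses go through exactly as in Corollaries~\ref{cor:non-simple} and~\ref{cor:non-closed}, since the auxiliary dot- and free-dot-counting functors used there already send every generator to a $\{0,1\}$-matrix and are hence automatically dagger functors.
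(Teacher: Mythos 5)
Your proposal is correct and follows essentially the same route as the paper: the same ring \ZXX with the variable-swapping involution, the same symmetrized interpretation of generators (your prescription $\llbracket \ell^\dagger\rrbracket_F := (\llbracket \ell\rrbracket_F)^\dagger$ is exactly the paper's two-sum definition of $\llbracket f \rrbracket_F$), the same magic-coefficient count of $|\textrm{BBij}(G,F)|$, and the same unmodified reuse of Corollaries~\ref{cor:non-simple} and~\ref{cor:non-closed}. The one place you go beyond the paper is the constrained-evaluation step, which the paper only asserts (in a footnote credited to Selinger): your argument --- writing $K = K_0(\theta)$ with $\overline{\theta} = -\theta$ over the infinite fixed field $K_0$, and noting that $X_b = u_b + \theta v_b$, $\overline{X}_b = u_b - \theta v_b$ is an invertible change of variables sending the nonzero integer polynomial $p$ to a nonzero polynomial with $K_0$-points where it does not vanish --- is a correct proof of that assertion.
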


\begin{proof}
  The proof is almost identical to that of Theorem~\ref{thm:hypergraph-completeness}. The main difference is that we define the functor $\llbracket - \rrbracket_F$ in terms of a different ring. Let \ZXX be the polynomial ring taking as variables $\mathbf X$ as before, along with a second copy $\overline{\mathbf X} := \{ \overline{X}_b \ |\ b \in B^F \}$. This becomes a ring with involution by taking the involution to be the ring homomorphism that interchanges $X_b \leftrightarrow \overline X_b$ and leaves the other ring elements fixed. We can then define
  \[ \llbracket - \rrbracket_F : \DotCat(\Sigma) \to \Mat(\ZXX) \]
  as:
  \[ \llbracket f \rrbracket_F :=
        \sum_{b \in (\Lbox^F)^{-1}(f)} f_b + 
        \sum_{b \in (\Lbox^F)^{-1}(f^\dagger)} (f_b)^\dagger \]
  Then, by a similar calculation to Theorem~\ref{thm:hypergraph-completeness} (and in fact, a nearly identical calculation to that in~\cite{SelingerCompleteness}) we get the following value for the closed, simple dot-diagram $G$:
  \begin{equation}\label{eq:big-sum}
    \llbracket G \rrbracket_F =
    \sum_{\phi \in \textrm{Idx}(G,\llbracket\rrbracket_F)}
    \sum_{\psi : B^G \to B^F}
    \prod_{b \in B^G}
    \begin{cases}
    X_{\psi(b)}
      & \textrm{if } \Lbox^F(\psi(b)) = \Lbox^G(b) \textrm{ and } \\
      & \phi(\Win^G(i,b)) = \Win^F(i,\psi(b)) \textrm{ and } \\
      & \phi(\Wout^G(b,j)) = \Wout^F(\psi(b),j) \textrm{ for all } i, j \\
    \overline X_{\psi(b)}
      & \textrm{if } \Lbox^F(\psi(b)) = \Lbox^G(b)^\dagger \textrm{ and } \\
      & \phi(\Wout^G(b,i)) = \Win^F(i,\psi(b)) \textrm{ and } \\
      & \phi(\Win^G(j,b)) = \Wout^F(\psi(b),j) \textrm{ for all } i, j \\
    0 & \textrm{otherwise}
    \end{cases}
  \end{equation}
  When $F$ and $G$ are simple and closed, we can still identify the `magic coefficient' of $\prod_b X_b$, whose value counts the cardinality of $\textrm{BBij}(G, F)$. Thus, if we first use the functor $\llbracket - \rrbracket^d$ to check if $F$ and $G$ have the same number of dots, then $\llbracket F \rrbracket_F = \llbracket G \rrbracket_F$ iff $F \cong G$.

  An involution-preserving semiring homomorphism will define a dagger-hypergraph functor from $\Mat(\ZXX)$ to $\Mat(K)$. Again, we define the functor $\textrm{ev} : \Mat(\ZXX) \to \Mat(K)$ in terms of an evaluation homomorphism, but this time one that sends $X_b$ to some element $k \in K$ which in turn fixes the value of $\overline{X}_b$ to be $\overline k$. Thus, the following is a dagger-hypergraph functor:
  \[ \llbracket - \rrbracket_{FG} := \textrm{ev}(\llbracket - \rrbracket_F) \]
  Note that we cannot choose \textit{all} the variables in $\llbracket F \rrbracket_{FG} - \llbracket G \rrbracket_{FG}$ independently, but rather only the $X_b$'s. However, whenever $K$ is characteristic zero and has a non-trivial involution, we can still choose values for $X_b$ such that $\llbracket F \rrbracket_{FG} - \llbracket G \rrbracket_{FG}$ is non-zero.\footnote{Thanks to Peter Selinger for pointing this out.} Thus, $\llbracket F \rrbracket_{FG} \neq \llbracket G \rrbracket_{FG}$.

  To extend from the simple, closed case, Corollaries~\ref{cor:non-simple} and~\ref{cor:non-closed} work unmodified.
\end{proof}


\bibliographystyle{akbib}
\bibliography{main}

\begin{thebibliography}{10}

\bibitem{AC1}
S.~Abramsky and B.~Coecke.
\newblock A categorical semantics of quantum protocols.
\newblock In {\em Proceedings of the 19th Annual IEEE Symposium on Logic in
  Computer Science (LICS)}, pages 415--425. IEEE Computer Society, 2004.
\newblock {a}rXiv:quant-ph/0402130.

\bibitem{CD2}
B.~Coecke and R.~Duncan.
\newblock Interacting quantum observables: categorical algebra and
  diagrammatics.
\newblock {\em New Journal of Physics}, 13:043016, 2011.
\newblock {arXiv:quant-ph/09064725}.

\bibitem{CPV}
B.~Coecke, D.~Pavlovi{\'c}, and J.~Vicary.
\newblock A new description of orthogonal bases.
\newblock {\em Mathematical Structures in Computer Science}, 2013.
\newblock {a}rXiv:quant-ph/0810.1037.

\bibitem{CPStar}
B.~Coecke, C.~Heunen, and A.~Kissinger.
\newblock Categories of quantum and classical channels.
\newblock arXiv:1305.3821 [quant-ph], 2013.

\bibitem{HasegawaHofmannPlotkin}
M.~Hasegawa, M.~Hofmann, and G.~D. Plotkin.
\newblock Finite dimensional vector spaces are complete for traced symmetric
  monoidal categories.
\newblock In A.~Avron, N.~Dershowitz, and A.~Rabinovich, editors, {\em Pillars
  of Computer Science}, volume 4800 of {\em Lecture Notes in Computer Science},
  pages 367--385. Springer, 2008.

\bibitem{AleksThesis}
A.~Kissinger.
\newblock Pictures of processes: Automated graph rewriting for monoidal
  categories and applications to quantum computing, 2012.
\newblock DPhil Thesis, Oxford University.

\bibitem{Lack}
S.~Lack.
\newblock Composing {PROP}s.
\newblock {\em Theory and Applications of Categories}, 13:147--163, 2004.

\bibitem{PaulAndreFunBox}
P.-A. Melli{\`e}s.
\newblock Functorial boxes in string diagrams.
\newblock In Z.~{\'E}sik, editor, {\em Computer Science Logic}, volume 4207 of
  {\em Lecture Notes in Computer Science}, pages 1--30. Springer Berlin
  Heidelberg, 2006.

\bibitem{SelingerCompleteness}
P.~Selinger.
\newblock Finite dimensional {H}ilbert spaces are complete for dagger compact
  closed categories (extended abstract).
\newblock {\em Electronic Notes in Theoretical Computer Science},
  270(1):113--119, 2011.

\bibitem{SelingerSelfDual}
P.~Selinger.
\newblock Autonomous categories in which {A} is isomorphic to {A}*.
\newblock In {\em Proceedings of the 7th International Workshop on Quantum
  Physics and Logic (QPL 2010)}, pages 151--160, 2010.

\end{thebibliography}

\end{document}